\newtheorem{thm}{Theorem}[section]
\newtheorem*{thm*}{Theorem}
\newtheorem{lem}[thm]{Lemma}
\theoremstyle{definition} \newtheorem{defn}[thm]{Definition}
\newtheorem{ex}[thm]{Example}
\newtheorem*{lem*}{Lemma}
\newtheorem*{rem}{Remark}
\newtheorem*{conj*}{Conjecture}
\newtheorem{prop}[thm]{Proposition}
\newtheorem{conj}[thm]{Conjecture}
\newtheorem*{cor*}{Corollary}
\newtheorem*{warn*}{Warning}
\newcommand{\CC}{\mathbb{C}}
\newcommand{\ZZ}{\mathbb{Z}}
\renewcommand{\dim}{\operatorname{dim}}
\newcommand{\codim}{\operatorname{codim}}
\newcommand{\GL}{\operatorname{GL}}
\renewcommand{\epsilon}{\varepsilon}
\newcommand{\sgn}{\operatorname{sgn}}
\newcommand{\Gr}{\operatorname{Gr}}
\newcommand{\Fl}{\operatorname{Fl}}
\newcommand{\Red}{\operatorname{Red}}
\newcommand{\affS}{\tilde{S}}
\DeclareMathOperator{\ann}{ann}
\newcommand{\av}{\operatorname{av}}
\newcommand{\Bound}{\operatorname{Bound}}
\newcommand{\Prj}{\operatorname{Prj}}
\newcommand{\rowspan}{\operatorname{rowspan}}
\newcommand{\dom}{\operatorname{dom}}
\begin{document}

\date{\today}
\author{Brendan Pawlowski}
\thanks{The author was partially supported by grant DMS-1101017 from the NSF}
\title{Cohomology classes of interval positroid varieties and a conjecture of Liu}

\begin{abstract}

To each finite subset of $\ZZ^2$ (a diagram), one can associate a subvariety of a complex Grassmannian (a diagram variety), and a representation of a symmetric group (a Specht module). Liu has conjectured that the cohomology class of a diagram variety is represented by the Frobenius characteristic of the corresponding Specht module. We give a counterexample to this conjecture.

However, we show that for the diagram variety of a permutation diagram, Liu's conjectured cohomology class $\sigma$ is at least an upper bound on the actual class $\tau$, in the sense that $\sigma - \tau$ is a nonnegative linear combination of Schubert classes. To do this, we exhibit the appropriate diagram variety as a component in a degeneration of one of Knutson's interval positroid varieties (up to Grassmann duality). A priori, the cohomology classes of these interval positroid varieties are represented by affine Stanley symmetric functions. We give a different formula for these classes as ordinary Stanley symmetric functions, one with the advantage of being Schur-positive and compatible with inclusions between Grassmannians.

\end{abstract}

\maketitle

\section{Introduction}

\subsection{Diagram varieties}

A \emph{diagram} is a finite subset $D$ of $\ZZ^2$. Write $[n]$ for $\{1, 2, \ldots, n\}$. Given a diagram contained in $[k] \times [n-k]$, define a subvariety $X_D$ of the Grassmannian $\Gr_k(n)$ of $k$-planes in $\CC^n$ as the Zariski closure of
\begin{equation*}
\left\{\rowspan\, [A \mid I_k] : A \in M_{k, n-k} \text{ with $A_{ij} = 0$ when $(i,j) \in D$} \right \}. 
\end{equation*}
Here $M_{k,n-k}$ is the set of $k \times (n-k)$ complex matrices, and $I_k$ is the $k \times k$ identity matrix. Call this variety $X_D$ a \emph{diagram variety}. For example, if $D = \{(1,1), (1,2), (2,1)\}$, $k = 2$, $n = 4$, then $X_D$ is the closure of the set of $2$-planes in $\CC^4$ which are the rowspans of matrices of the form
\begin{equation*}
\begin{bmatrix}
0 & 0 & 1 & 0\\
0 & * & 0 & 1
\end{bmatrix}.
\end{equation*}

Let $S_D$ denote the set of permutations of $D$. One can also associate a (complex) representation $Sp_D$ of the symmetric group $S_D$ to a diagram $D$, called the \emph{Specht module of $D$}. These generalize the usual irreducible Specht modules, which occur when $D$ is the Young diagram of a partition; the definition for general diagrams is due to James and Peel \cite{jamespeel}.

Each of these objects, diagram variety and Specht module, naturally leads to a class in the cohomology ring $H^*\Gr_k(n) := H^*(\Gr_k(n), \ZZ)$. For the diagram variety, we take the Chow ring class of $X_D$ and use the natural isomorphism between $H^*(\Gr_k(n), \ZZ)$ and the Chow ring of $\Gr_k(n)$ to obtain a cohomology class $[X_D] \in H^{2\#D}(\Gr_k(n), \ZZ)$.

As for the Specht module, let $s_D$ be the Frobenius characteristic of $S^D$, meaning $s_D = \sum_{\lambda} a_{\lambda} s_{\lambda}$ if $S^D \simeq \bigoplus_\lambda a_{\lambda} S^{\lambda}$, where $s_{\lambda}$ is a Schur function. Here $\lambda$ runs over partitions, and $S^{\lambda}$ is an irreducible Specht module. There is a surjective ring homomorphism $\phi$ from the ring of symmetric functions to $H^*(\Gr_k(n), \ZZ)$, sending the Schur function $s_{\lambda}$ to the Schubert class $\sigma_\lambda := [X_{\lambda}]$, or to $0$ if $\lambda \not\subseteq (k^{n-k})$ \cite{youngtableaux}. Hence we can consider the cohomology class $\phi(s_D)$.

\begin{conj*}[Liu \cite{liuthesis}, Conjecture~\ref{conj:liu} below] For any diagram $D$, the cohomology classes $[X_D]$ and $\phi(s_D)$ are equal. \end{conj*}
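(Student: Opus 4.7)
The plan is to try to verify the conjecture by simultaneously computing $[X_D]$ and $\phi(s_D)$ in terms of Schubert classes via a common combinatorial model, expanding both as Schur-positive sums and matching multiplicities. The natural starting point is the base case where $D = \lambda$ is the Young diagram of a partition $\lambda \subseteq (k^{n-k})$: then $X_D$ is exactly the Schubert variety $X_\lambda$ and $S^D$ is the irreducible Specht module $S^\lambda$, so both sides are the Schubert class $\sigma_\lambda = \phi(s_\lambda)$, confirming the conjecture in this case.

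From there I would try to bootstrap to general $D$ by exploiting operations that preserve both sides. On the algebraic side, column permutations and row permutations of $D$ induce isomorphisms of Specht modules, so $s_D$ is invariant under these; on the geometric side, one hopes to realize such symmetries via automorphisms of $\Gr(k,n)$ or one-parameter families whose general fiber is $X_{D'}$ and whose special fiber is $X_D$, which would give $[X_D] = [X_{D'}]$. Another class of moves to try are the James--Peel ``transposition'' moves on diagrams, which are known to produce filtrations of Specht modules whose composition factors can be read off combinatorially; the goal is to mirror such a filtration geometrically, by exhibiting $X_D$ as a degeneration whose components are (with multiplicity) the diagram varieties appearing in the Specht-module filtration.

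The main obstacle is that $s_D$ is a subtle combinatorial invariant of $D$ that depends on the actual incidence structure (column tabloids, polytabloids, the James--Peel combinatorics), whereas $[X_D]$ is a topological invariant only of a particular algebraic subvariety and is insensitive to small perturbations of $D$ that can nonetheless change $S^D$ drastically. There is no a priori reason a degeneration or component-counting argument on the Grassmannian side should see the same multiplicities that arise from the (characteristic zero) Specht decomposition. In particular I would expect the strategy to break down precisely for diagrams that are far from column- or row-convex, where $s_D$ picks up corrections from James--Peel moves with no obvious geometric analogue. This is in fact where one should hunt for counterexamples, and as the abstract indicates, it is only after weakening the conjecture to a one-sided inequality $\phi(s_D) - [X_D] \in \sum \Z_{\geq 0}\, \sigma_\lambda$ (for $D$ a permutation diagram) that one can hope to reinstate geometric control, via the degeneration of Coskun's rank varieties mentioned there.
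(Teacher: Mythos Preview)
The fundamental gap is that you are attempting to prove a statement that is \emph{false}. The paper does not prove this conjecture; it disproves it. Section~\ref{sec:counterexample} exhibits the explicit counterexample $D = \{(1,1),(2,2),(3,3),(4,4)\}$ in $\Gr(4,8)$: here $s_D$ is the Frobenius characteristic of the regular representation of $S_4$, so $\phi(s_D) = \sigma_{1111} + 3\sigma_{211} + 2\sigma_{22} + 3\sigma_{31} + \sigma_4$, with predicted degree $\dim S^{D^\vee} = 24024$. A direct computation (in Macaulay2) gives $\deg X_D = 21384$. More precisely, the four Pl\"ucker coordinates vanishing on $X_D^\circ$ cut out a complete intersection $Y$ with $[Y] = \sigma_1^4 = \phi(s_D)$, but $Y$ has a second component, a Schubert variety of class $\sigma_{22}$, so $[X_D] = \phi(s_D) - \sigma_{22}$.

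Your outline is therefore doomed from the start: no amount of matching James--Peel filtrations to geometric degenerations can succeed in general, because the multiplicities genuinely differ. You do gesture toward this in your final paragraph, correctly locating the failure at diagrams far from row/column-convex and noting that only the one-sided inequality survives for permutation diagrams. But a ``proof proposal'' for a false conjecture should consist of a counterexample, not a sketch of a strategy followed by a concession that it probably breaks. The actionable content of your last paragraph---hunt for counterexamples among non-convex diagrams---is exactly right; the paper's $D$ (four cells on a diagonal) is about as non-convex as a four-cell diagram can be.
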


Liu proved Conjecture~\ref{conj:liu}, or the weaker variant claiming equality of degrees, for various classes of diagrams \cite{liuthesis}. However, it turns out that this conjecture fails in general, as we show in Section~\ref{sec:counterexample}.

\begin{thm*} Conjecture~\ref{conj:liu} fails for $X_D \subseteq \Gr_4(8)$ where $D = \{(1,1),(2,2),(3,3),(4,4)\}$. \end{thm*}

Let $D(w)$ denote the \emph{Rothe diagram} of $w \in S_n$: the diagram with a cell $(i, w(j))$ for each inversion $i < j$, $w(i) > w(j)$ of $w$. Work of Kra\'skiewicz and Pragacz \cite{kraskiewicz-pragacz} and of Reiner and Shimozono \cite{plactification} shows that $s_{D(w)}$ is the Stanley symmetric function $F_w$ \cite{stanleysymm}. Thus, if Conjecture~\ref{conj:liu} were to hold for $D(w)$, we would have $[X_{D(w)}] = \phi(F_w)$.

Building on work of Postnikov \cite{postnikov-positroids}, Knutson, Lam, and Speyer \cite{positroidjuggling} have defined a collection of subvarieties $\Pi_f$ of Grassmannians called \emph{positroid varieties}, indexed by certain affine permutations $f$. A positroid variety is defined by imposing some rank conditions on cyclic intervals of columns of matrices representing points in $\Gr_k(n)$, and any irreducible variety defined by such rank conditions is a positroid variety.  They show that the positroid variety $\Pi_f$ has cohomology class $\phi(\tilde{F}_f)$, where $\tilde{F}_f$ is the affine Stanley symmetric function of $f$, as defined in \cite{lam-affine-stanley}. Given an ordinary permutation $w \in S_n$, define a bijection $f_w : \ZZ \to \ZZ$ by
\begin{equation*}
f_w(i) = \begin{cases}
i+n & \text{if $1 \leq i \leq n$}\\
w(i)+2n & \text{if $n \leq i \leq 2n$}
\end{cases}
\end{equation*}
and $f(i+2n) = f(i)+2n$. One can show that $\tilde{F}_{f_w} = F_w$.

By the previous two paragraphs, Conjecture~\ref{conj:liu} would imply equality of the classes $[\Pi_{f_w}]$ and $[X_{D(w)}]$. As we will see, Conjecture~\ref{conj:liu} can fail for permutation diagrams $D = D(w)$, and in general $[\Pi_{f_w}]$ and $[X_{D(w)}]$ need not be equal. Nevertheless, we will give a degeneration of $\Pi_{f_w}$ to a (possibly reducible) variety containing $X_{D(w)}$ as a component, which implies the following upper bound on $[X_{D(w)}]$.

\begin{thm*}[Theorem~\ref{thm:diagram-variety-class-bound}] The cohomology class $\phi(F_w) - [X_{D(w)}]$ is a nonnegative integer combination of Schubert classes. \end{thm*}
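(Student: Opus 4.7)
The plan is to realize $\phi(F_w)$ as the cohomology class of a single irreducible subvariety of $\Gr(k,n)$ and then produce a flat degeneration of that subvariety whose special fiber contains $X_{D(w)}$ as a reduced irreducible component. Once this geometric setup is in place, conservation of cohomology classes under flat degeneration together with the effectivity of Schubert expansions of subvariety classes will immediately yield the claimed bound.

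For the parent variety I would use Coskun's rank variety $R_w \subseteq \Gr(k,n)$ attached to $w$. The abstract promises that such rank varieties are positroid varieties whose classes are represented by ordinary Stanley symmetric functions, so this step reduces to identifying $R_w$ with the positroid variety $\Pi_{f_w}$. Granting this, the Knutson--Lam--Speyer formula gives $[R_w] = \phi(\tilde F_{f_w})$, and the identity $\tilde F_{f_w} = F_w$ noted in the introduction yields $[R_w] = \phi(F_w)$.

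The heart of the argument is the degeneration. I would construct a flat one-parameter family in $\Gr(k,n)$ whose generic fiber is $R_w$ and whose special fiber has $X_{D(w)}$ as an irreducible component of multiplicity one, by moving the linear conditions defining $R_w$ toward the coordinate conditions cutting out $X_{D(w)}$, in the spirit of Gr\"obner degenerations of matrix Schubert varieties. A codimension count matches: both $R_w$ and $X_{D(w)}$ have codimension $\ell(w) = |D(w)|$ in $\Gr(k,n)$. I expect this to be the main obstacle, in two respects: first, writing down an explicit family and verifying its flatness; and second, confirming that $X_{D(w)}$ appears as a reduced component of the special fiber rather than being absorbed into a larger component or acquiring excess multiplicity. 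For the latter point I would try to exhibit an open dense subset of $X_{D(w)}$ near which the total space of the family is smooth, using the row-reduced matrix parametrization of $X_{D(w)}$ from the introduction to write down an explicit transverse slice.

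Granted the degeneration, flatness gives
\begin{equation*}
\phi(F_w) \;=\; [R_w] \;=\; [X_{D(w)}] + \sum_i c_i [Y_i]
\end{equation*}
in $H^*(\Gr(k,n), \Z)$, where the $Y_i$ are the remaining irreducible components of the special fiber and the $c_i$ are positive integers. By Kleiman transversality, the class of any irreducible subvariety of $\Gr(k,n)$ expands with nonnegative integer coefficients in the Schubert basis, so each $[Y_i]$ does; therefore $\phi(F_w) - [X_{D(w)}] = \sum_i c_i [Y_i]$ is itself a nonnegative integer combination of Schubert classes.
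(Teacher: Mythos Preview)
Your proposal is correct and matches the paper's approach: the paper takes as parent variety the rank variety $\Sigma_{M(w)} \subseteq \Gr(n,2n)$ for $M(w) = \{[w(i),\, i+n] : 1 \leq i \leq n\}$ (whose class is $\phi(F_w)$ since $f_{M(w)} = (w \times 1_n)\,\tau^{-n}$), degenerates it via the explicit one-parameter family $\phi_{t,w} = \phi_{t,\,n+1\to w(1)} \circ \cdots \circ \phi_{t,\,2n\to w(n)}$ of linear automorphisms, and checks by a direct combinatorial computation that the flat limit contains $X_{D(w)}$ as an irreducible component. One simplification over your outline: you do not actually need the component to be reduced, since if $X_{D(w)}$ appears with any positive multiplicity $m$ then $\phi(F_w) - [X_{D(w)}] = (m-1)[X_{D(w)}] + \sum_i c_i [Y_i]$ is still Schubert-nonnegative, as $[X_{D(w)}]$ is itself a nonnegative combination of Schubert classes.
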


\subsection{Limits of classes of interval positroid varieties}

The positroid varieties defined by rank conditions only involving honest intervals of columns (as opposed to cyclic intervals) are called \emph{interval positroid varieties} \cite{knutson-interval-positroid-varieties}. For $w \in S_n$, the Grassmann duals of the varieties $\Pi_{f_w}$ described above are examples of interval positroid varieties. There are several ways to compute the class $[\Sigma]$ of an interval positroid variety $\Sigma$. First, $[\Sigma] = \phi(\tilde{F}_f)$ for some affine permutation $f$ by the work of Knutson-Lam-Speyer described above. Second, Coskun \cite{two-step-flags} gives a recursive rule for computing $[\Sigma]$ by degenerating $\Sigma$ to a union of Schubert varieties, and in \cite{knutson-interval-positroid-varieties}, Knutson computes the more general torus-equivariant K-theory class of $\Sigma$ in this way.

We give a different formula for $[\Sigma]$ which is \emph{stable} in the following sense. Given a list $M = (S_1, \ldots, S_m)$ of intervals all contained in $[n]$ and a vector $r = (r_1, \ldots, r_m)$ of nonnegative integers, define
\begin{equation*}
\Sigma_{M,r,n} = \{\rowspan(A) \in \Gr_k(n) : \text{the submatrix of $A$ in columns $S_i$ has rank $\leq r_i$ for all $i$}\}.
\end{equation*}
If $\Sigma_{M,r,n}$ is irreducible, then it is an interval positroid variety.

The standard inclusion $\CC^n \hookrightarrow \CC^{n+1}$ defines an inclusion $\Gr_k(\CC^n) \hookrightarrow \Gr_k(\CC^{n+1})$, hence a pullback map $H^* \Gr_k(n{+}1) \twoheadrightarrow H^* \Gr_k(n)$, and this pullback sends $[\Sigma_{M,r,n{+}1}]$ to $[\Sigma_{M,r,n}]$. We can therefore ask for a formula for a class $\alpha$ in the inverse limit $\varprojlim_N H^* \Gr_k(N)$ which represents the classes $[\Sigma_{M,r,n}]$ for every $n$, in the sense that for every $n$ the map $\displaystyle \varprojlim H^* \Gr_k(N) \to H^*\Gr_k(n)$ sends $\alpha$ to $[\Sigma_{M,r,n}]$.

\begin{thm*}[Theorem~\ref{thm:interval-positroid-class}] If $\Sigma_{M,r,n} \subseteq \Gr_k(n)$ is an interval positroid variety, there is an ordinary permutation $w$ such that the ordinary Stanley symmetric function $F_w$ represents the class $\Sigma_{M,r,n}$ for all $n$. \end{thm*}

\subsection*{Acknowledgements}

The results here owe much to conversations with Sara Billey, who suggested this problem to me. I'm also grateful to Dave Anderson, Andy Berget, Izzet Coskun, Allen Knutson, Ricky Liu, Vic Reiner, Robert Smith, Alex Yong, and an anonymous referee for helpful discussions and comments.

\section{A counterexample to Liu's conjecture}
\label{sec:counterexample}

\begin{defn}
A \emph{diagram} is a finite subset of $\ZZ^2$.
\end{defn}

Given a diagram $D$ contained in $[k] \times [n-k]$, define an open subset
\begin{equation*}
X_D^\circ = \{\rowspan\,[A\,|\,I_k] : A \in M_{k,n-k} \text{ such that $A_{ij} = 0$ whenever $(i,j) \in D$} \}
\end{equation*}
of the complex Grassmannian $\Gr_k(n)$. For example, if $D = \{(1,1), (1,2), (2,2), (2,3)\}$, $k = 2$, and $n = 5$, then
\begin{equation*}
X_D^\circ = \left \{\rowspan \begin{pmatrix}
0 & 0 & * & 1 & 0\\
* & 0 & 0 & 0 & 1
\end{pmatrix} \right \}.
\end{equation*}

\begin{defn}
The \emph{diagram variety} $X_D$ of $D$ is $\overline{X_D^\circ}$, the closure being in the Zariski topology.
\end{defn}

Notice that $X_D^{\circ}$ is an open dense subset of $X_D$ isomorphic to $\mathbb{C}^{k(n-k)-\#D}$. In particular, it is irreducible, so $X_D$ is also irreducible and has codimension $\#D$.

We now describe a representation of $S_{D}$ associated to each diagram $D$. Let $R(D)$ denote the group of permutations $\sigma \in S_D$ for which $b$ and $\sigma(b)$ are in the same row for any $b \in D$. Let $C(D)$ be the analogous subgroup with ``row'' replaced by ``column''.

\begin{defn} The \emph{Specht module} of $D$ is the left ideal
\begin{equation*}
Sp_D = \CC[S_D] \sum_{p \in R(D)} \sum_{q \in C(D)} \sgn(q)qp
\end{equation*}
of $\CC[S_D]$, viewed as an $S_D$-module.
\end{defn}

The Specht modules associated to general diagrams were studied by James and Peel \cite{jamespeel}. As $D$ runs over (Ferrers diagrams of) partitions of $m$, the Specht modules provide a complete, irredundant set of complex irreducibles for $S_m$ (see \cite{youngtableaux, sagan}). The isomorphism type of $Sp_D$ is unaltered by permuting the rows or the columns of $D$.  If the rows and columns of $D$ cannot be permuted to obtain a partition---equivalently, the rows of $D$ are not totally ordered under inclusion---then $Sp_D$ will not be irreducible. For example, if $\lambda\setminus \mu$ is a skew shape, then
\begin{equation*}
Sp_{\lambda\setminus \mu} \simeq \bigoplus_{\nu} c^{\lambda}_{\mu \nu} Sp_{\nu},
\end{equation*}
where $c^{\lambda}_{\mu \nu}$ is a Littlewood-Richardson coefficient.

In general it is an open problem to give a combinatorial rule for decomposing $Sp_{D}$ into irreducibles. The widest class of diagrams for which such a rule is known are the \emph{percent-avoiding} diagrams, studied by Reiner and Shimozono \cite{percentavoiding}; see also \cite{liuforests} and \cite{columnconvex}.

Given a diagram $D \subset [k] \times [n-k]$, let $D^{\vee}$ be the complement of $D$ in $[k] \times [n-k]$ rotated by $180^\circ$. For example, if $\mu \subseteq \lambda \subseteq [k] \times [n-k]$ are partitions, then $X_{\lambda^\vee}^\circ \cap X_{\mu}^\circ = X_{(\lambda/\mu)^\vee}^\circ$. This intersection is transverse on the dense open subset $X_{(\lambda/\mu)^\vee}^\circ$ of $X_{(\lambda/\mu)^\vee}$, and indeed one can show that $[X_{(\lambda/\mu)^\vee}] = \sum_{\nu} c^{\lambda}_{\mu \nu} \sigma_{\nu^{\vee}}$ \cite[Proposition 5.5.3]{liuthesis}.

Magyar has shown that Specht module decompositions behave as nicely as possible with respect to the box complement operation.
\begin{thm}[Magyar \cite{magyarborelweil}] \label{thm:magyar}
For any diagram $D$ contained in $[k] \times [n-k]$, $Sp_D \simeq \bigoplus_{\lambda} a_{\lambda} Sp_{\lambda}$ if and only if $Sp_{D^\vee} \simeq \bigoplus_{\lambda} a_{\lambda} Sp_{\lambda^{\vee}}$.
\end{thm}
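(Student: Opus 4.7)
The plan is to reduce the statement to a duality among polynomial $GL_k$-modules. To each diagram $D \subseteq [k] \times [n-k]$ with column sets $C_j = \{i : (i,j) \in D\} \subseteq [k]$ for $j = 1, \ldots, n-k$, attach the \emph{Schur module}
\begin{equation*}
M^D = GL_k \cdot (e_{C_1} \otimes \cdots \otimes e_{C_{n-k}}) \subseteq \bigotimes_{j=1}^{n-k} \bigwedge^{|C_j|} \C^k,
\end{equation*}
where $e_C = e_{i_1} \wedge \cdots \wedge e_{i_r}$ for $C = \{i_1 < \cdots < i_r\}$. A standard fact about these modules (going back to James--Peel) is that the multiplicities in $S^D \simeq \bigoplus_\lambda a_\lambda S^\lambda$ as an $S_{|D|}$-module match those in $M^D \simeq \bigoplus_\lambda a_\lambda V^\lambda$ as a $GL_k$-module, where $V^\lambda$ is the irreducible polynomial representation of highest weight $\lambda$. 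I would take this correspondence as the starting point.

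Given it, the theorem reduces to producing a $GL_k$-equivariant isomorphism
\begin{equation*}
M^{D^\vee} \simeq (M^D)^{*} \otimes (\det)^{n-k}.
\end{equation*}
Combined with the well-known identity $V^{\lambda^\vee} \simeq (V^\lambda)^{*} \otimes (\det)^{n-k}$ for $\lambda \subseteq [k] \times [n-k]$, this immediately yields the desired equivalence: if $M^D$ contains $V^\lambda$ with multiplicity $a_\lambda$, then $M^{D^\vee}$ contains $V^{\lambda^\vee}$ with the same multiplicity $a_\lambda$, which is precisely Magyar's theorem on the Specht side. To build the isomorphism, observe that the $j$-th column of $D^\vee$ is the complement in $[k]$ of the $(n-k+1-j)$-th column of $D$. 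Applying the canonical perfect pairing $\bigwedge^{r} \C^k \otimes \bigwedge^{k-r} \C^k \to \bigwedge^{k} \C^k$ to each tensor factor therefore produces a $GL_k$-invariant bilinear map $M^D \otimes M^{D^\vee} \to (\det)^{n-k}$, and hence a $GL_k$-equivariant morphism $M^{D^\vee} \to (M^D)^{*} \otimes (\det)^{n-k}$.

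The main obstacle is proving this morphism is an isomorphism, equivalently that the pairing is \emph{nondegenerate}. The most transparent route, and essentially Magyar's, is via Borel--Weil: realize $M^D$ as the space of global sections of an equivariant line bundle $L_D$ on an appropriate smooth projective $GL_k$-variety $Y_D$ built from the column structure of $D$, identify $M^{D^\vee}$ (up to the necessary $(\det)^{n-k}$ twist) with sections of the dual line bundle on the same $Y_D$, and then deduce nondegeneracy from the nondegeneracy of Serre duality. A more elementary alternative is to verify directly the character identity
\begin{equation*}
\chi_{M^{D^\vee}}(x_1, \ldots, x_k) = (x_1 \cdots x_k)^{n-k} \, \chi_{M^D}(x_1^{-1}, \ldots, x_k^{-1}),
\end{equation*}
which reduces the entire question to a combinatorial statement about the torus characters of Schur modules; once character equality is in hand, the canonical morphism above is forced to be an isomorphism by $GL_k$-equivariance and complete reducibility.
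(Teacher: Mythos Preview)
The paper does not prove this theorem; it is quoted as a result of Magyar with a citation to \cite{magyarborelweil} and used as a black box. There is therefore no proof in the paper to compare your proposal against.

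That said, your outline is broadly in the spirit of Magyar's actual argument, which does realize $M^D$ via a Borel--Weil construction on a configuration variety and extracts the duality from Serre duality. A few remarks on the sketch itself:

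\begin{itemize}
\item The $j$-th column of $D^\vee$ is not literally the complement in $[k]$ of the $(n-k+1-j)$-th column of $D$: because $D^\vee$ is the $180^\circ$ rotation of the complement, it is that complement \emph{reflected} under $i \mapsto k+1-i$. This does not affect the $GL_k$-isomorphism type, but the explicit pairing you write down must absorb this reflection.
\item Your sentence ``once character equality is in hand, the canonical morphism above is forced to be an isomorphism by $GL_k$-equivariance and complete reducibility'' is false as written: a $GL_k$-map between two isomorphic semisimple modules can perfectly well be zero. What \emph{is} true, and is all you need, is that character equality already gives an abstract isomorphism $M^{D^\vee} \simeq (M^D)^* \otimes (\det)^{n-k}$, and hence equality of multiplicities.
\item The ``more elementary alternative'' is not really an alternative. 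Proving the character identity $\chi_{M^{D^\vee}}(x) = (x_1\cdots x_k)^{n-k}\chi_{M^D}(x^{-1})$ directly for an arbitrary diagram $D$ is exactly as hard as the theorem; there is no known combinatorial description of $\chi_{M^D}$ for general $D$ that makes this transparent. Magyar's Borel--Weil/Serre duality argument is precisely the mechanism that supplies it.
\end{itemize}

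So your first route (Borel--Weil plus Serre duality) is the right one and matches Magyar; the second route, as stated, is circular.
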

In particular, $Sp_{(\lambda/\mu)^\vee} \simeq \bigoplus_{\nu} c_{\mu \nu}^{\lambda} Sp_{\nu^\vee}$. Comparing this decomposition of $Sp_{(\lambda/\mu)^{\vee}}$ to the expansion $[X_{(\lambda/\mu)^\vee}] = \sum_{\nu} c^{\lambda}_{\mu \nu} \sigma_{\nu^{\vee}}$ discussed above suggests the next conjecture (and proves it when $D = (\lambda/\mu)^\vee$).

\begin{conj}[Liu \cite{liuthesis}] \label{conj:liu} For any diagram $D$, the cohomology classes $[X_D]$ and $\phi(s_D)$ are equal. \end{conj}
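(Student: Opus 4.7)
The plan is to pursue two complementary lines of attack: try to prove the conjecture by a degeneration argument, and, in parallel, test it on diagrams whose Specht modules are forced to be far from irreducible.

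On the positive side, the evidence already in the excerpt (partitions, skew shapes $(\lambda/\mu)^\vee$, and their images under Magyar's complement operation) suggests looking for a flat degeneration of $X_D$ whose components are Schubert varieties $X_\nu$ matching, with multiplicity, the Schur expansion of $s_D$ coming from the James--Peel/Magyar decomposition of $S^D$. Two natural families to try are Gr\"obner-style limits of the defining affine chart $X_D^\circ \simeq \C^{k(n-k) - |D|}$, and step-by-step ``cell moves'' that slide cells of $D$ toward the northwest corner, which should both specialize $X_D$ toward a Schubert cycle and act predictably on $s_D$. The principal obstacle is that, unlike the skew case, a general diagram $D$ has no obvious Schubert-like filtration and no reason for $X_D$ to be as well-behaved as a Richardson variety; the Garnir-style combinatorics that organize $S^D$ have no evident geometric counterpart.

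Because of this, I would simultaneously hunt for small counterexamples. The most suggestive test case is the diagonal diagram $D = \{(1,1),(2,2),(3,3),(4,4)\} \subseteq [4] \times [4]$. Every row and every column of $D$ contains a single cell, so $R(T) = C(T) = \{e\}$ and $S^D \simeq \C[S_4]$ is the regular representation. Hence $s_D = s_1^4$, and iterated Pieri gives
\[
\phi(s_D) = \sigma_{(4)} + 3\sigma_{(3,1)} + 2\sigma_{(2,2)} + 3\sigma_{(2,1,1)} + \sigma_{(1,1,1,1)}
\]
in $H^*(\Gr(4,8), \Z)$. This is a minimal setting where $s_D$ cannot ``know'' anything about the spatial arrangement of the cells of $D$ beyond the fact that the row and column stabilizers are trivial, so any geometric contribution of the cells' positions to $[X_D]$ must show up as a discrepancy.

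The hard step is then computing $[X_D]$ directly. Here $X_D$ is the closure of rowspans $[A \mid I_4]$ with $A$ a $4 \times 4$ matrix with zero diagonal, a codimension-$4$ condition on a $16$-dimensional affine chart. I would pin down $[X_D]$ by computing intersection numbers against generic translates $g X_{\lambda^\vee}$ for each $\lambda \vdash 4$ in the $4\times 4$ box. Each such count reduces to a finite polynomial system in the free entries of $A$ and the chosen flag, and any single discrepancy with the expansion above disproves the conjecture. The technical burden lies in controlling multiplicities and verifying that the relevant intersections in $X_D^\circ$ are transverse, so that the scheme-theoretic count agrees with the cohomological pairing; this is where I expect the main work to be and where the argument will either find the counterexample advertised in the earlier theorem statement or, surprisingly, confirm the conjecture for this $D$.
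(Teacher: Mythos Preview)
The statement is a conjecture, and the paper does not prove it; it disproves it. So the ``prove'' branch of your two-pronged plan is doomed, and you should not expect the degeneration to Schubert cycles to match the James--Peel multiplicities in general. That said, your instinct to test the diagonal diagram $D=\{(1,1),(2,2),(3,3),(4,4)\}\subseteq[4]\times[4]$ is exactly right: this is the paper's counterexample, and your computation $\phi(s_D)=\sigma_4+3\sigma_{31}+2\sigma_{22}+3\sigma_{211}+\sigma_{1111}$ agrees with the paper.

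Where your approach differs is in verifying that $[X_D]\neq\phi(s_D)$. You propose computing all five intersection numbers $[X_D]\cdot\sigma_{\lambda^\vee}$ for $\lambda\vdash 4$ by counting points in generic intersections and checking transversality. This would work, but the paper takes a shorter route: it computes the single invariant $\deg X_D$ (via Macaulay2) and compares it to $\dim S^{D^\vee}=24024$, finding $\deg X_D=21384$. One number suffices to kill the conjecture, and degree is easier to extract reliably from a computer algebra system than a full Schubert decomposition. The paper then explains the discrepancy conceptually: the scheme $Y$ cut out by the four Pl\"ucker coordinates $p_{1678},p_{2578},p_{3568},p_{4567}$ that vanish on $X_D^\circ$ is a complete intersection of codimension $4$, so $[Y]=\sigma_1^4=\phi(s_D)$; but $Y$ has a second component, a Schubert variety with class $\sigma_{22}$, and hence $[X_D]=[Y]-\sigma_{22}$. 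This pinpoints exactly which coefficient is wrong and by how much, without any transversality analysis against moving Schubert cycles. Your intersection-number method would eventually recover the same answer, but the Pl\"ucker-ideal decomposition is both cleaner and more informative.
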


Liu proved Conjecture~\ref{conj:liu} in the case above where $D^{\vee}$ is a skew shape, or when it corresponds to a forest \cite{liuthesis} in the sense that one can represent a diagram $D \subset [k] \times [n-k]$ as the bipartite graph with white vertices $[k]$, black vertices $[n-k]$, and an edge between a white $i$ and black $j$ whenever $(i,j) \in D$. In \cite{billey-pawlowski-2013}, we proved Conjecture~\ref{conj:liu} when $D^{\vee}$ is a permutation diagram and $Sp_D$ is multiplicity-free.

One gets a weaker version of Conjecture~\ref{conj:liu} by comparing degrees. The \emph{degree} of a codimension $d$ subvariety $X$ of $\Gr_k(n)$ is the integer $\deg(X)$ defined by $[X]\sigma_1^{k(n-k)-d} = \deg(X)\sigma_{(k^{n-k})}$. Under the Pl\"ucker embedding, this gives the usual notion of the degree of a subvariety of projective space, namely the number of points in the intersection of $X$ with a generic $d$-dimensional linear subspace. One can check using Pieri's rule that $\deg(\sigma_{\lambda}) = f^{\lambda^{\vee}}$, the number of standard Young tableaux of shape $\lambda^\vee$. This is also $\dim Sp_{\lambda^\vee}$. Since degree is additive on cohomology classes, Conjecture~\ref{conj:liu} predicts the following.

\begin{conj}[Liu] \label{conj:liudegree} The degree of $X_{D}$ is $\dim Sp_{D^{\vee}}$. \end{conj}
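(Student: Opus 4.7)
The plan is to express both sides of the conjectured equality in terms of expansions in the Schur and Schubert bases and then compare. Writing $s_D = \sum_\lambda a_\lambda s_\lambda$ and $[X_D] = \sum_\lambda c_\lambda \sigma_\lambda$, both sums over partitions $\lambda \subseteq (k^{n-k})$ of size $|D|$, Magyar's Theorem~\ref{thm:magyar} gives
\[
\dim S^{D^\vee} \;=\; \sum_\lambda a_\lambda \dim S^{\lambda^\vee} \;=\; \sum_\lambda a_\lambda f^{\lambda^\vee},
\]
while additivity of degree combined with $\deg(\sigma_\lambda) = f^{\lambda^\vee}$ gives $\deg(X_D) = \sum_\lambda c_\lambda f^{\lambda^\vee}$. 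Conjecture~\ref{conj:liudegree} is therefore equivalent to the single numerical identity
\[
\sum_\lambda (a_\lambda - c_\lambda)\, f^{\lambda^\vee} = 0,
\]
which is a priori strictly weaker than the pointwise Conjecture~\ref{conj:liu}.

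I would first specialize to permutation diagrams $D = D(w)$. Theorem~\ref{thm:diagram-variety-class-bound} asserts that $\phi(F_w) - [X_{D(w)}]$ is a nonnegative combination of Schubert classes, i.e.\ $a_\lambda \geq c_\lambda$ for every $\lambda$, and since each $f^{\lambda^\vee}$ is strictly positive the reduced identity collapses to $a_\lambda = c_\lambda$ for all $\lambda$. Hence the degree conjecture for permutation diagrams is \emph{equivalent} to the class-level Conjecture~\ref{conj:liu} for permutation diagrams, and proving it amounts to showing that the degeneration of $\Pi_{f_w}$ underlying Theorem~\ref{thm:diagram-variety-class-bound} yields $X_{D(w)}$ with no extra components. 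More generally, if Schubert-nonnegativity of $\phi(s_D) - [X_D]$ could be extended to \emph{all} diagrams, the same collapse would apply; but then the counterexample of Section~\ref{sec:counterexample} would falsify Conjecture~\ref{conj:liudegree} as well, so either this positivity must break for non-permutation diagrams, or the degree conjecture itself must be false in generality.

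The main obstacle is that in the non-permutation case a proof of Conjecture~\ref{conj:liudegree} must rely on genuine cancellation between the $(a_\lambda - c_\lambda)$'s of opposite sign, not on term-by-term positivity. A natural first sanity check is to compute both sides directly in the counterexample $D = \{(1,1),(2,2),(3,3),(4,4)\} \subset [4]\times[4]$: because $R(T) = C(T) = \{e\}$, $S^D$ is the regular representation of $S_4$, so $a_\lambda = f^\lambda$ and $\dim S^{D^\vee} = \sum_{\lambda \vdash 4} f^\lambda f^{\lambda^\vee}$, while $\deg(X_D)$ can be extracted by intersecting the explicit matrix parametrization of $X_D^\circ \cong \C^{12}$ with $12$ generic hyperplane classes in the Pl\"ucker embedding. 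Agreement would be evidence that a finer geometric argument (for instance a degeneration of $X_D$ to a union of coordinate subspaces whose multiplicities distribute among the Specht summands of $S^{D^\vee}$) is worth pursuing; disagreement would show that Conjecture~\ref{conj:liudegree} must itself be revised, and would refocus the problem on characterizing the class of diagrams $D$ for which the cancellation identity actually holds.
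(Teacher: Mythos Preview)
The statement is a \emph{conjecture}, and the paper does not prove it --- it \emph{disproves} it. For $D = \{(1,1),(2,2),(3,3),(4,4)\}$ in $[4]\times[4]$, the paper computes $\dim S^{D^\vee} = 24024$ from the decomposition of $S^D$ as the regular representation of $S_4$ together with Theorem~\ref{thm:magyar}, while an explicit Macaulay2 calculation gives $\deg X_D = 21384$. Your proposal is not a proof but an outline that ends inconclusively (``agreement would be evidence\ldots; disagreement would show\ldots''); the ``sanity check'' you recommend is exactly the computation the paper performs, and it yields disagreement. The gap is simply that you stop short of executing your own plan.

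Your collapse argument for permutation diagrams is correct, however, and it furnishes a second route to the disproof that the paper does not make explicit. The counterexample $D$ is, up to row and column permutations and enlarging the ambient rectangle, the permutation diagram $D(21436587)$; Theorem~\ref{thm:diagram-variety-class-bound} then gives $a_\lambda \ge c_\lambda$ for every $\lambda$, so $\sum_\lambda (a_\lambda - c_\lambda) f^{\lambda^\vee} = 0$ with all summands nonnegative would force $a_\lambda = c_\lambda$ termwise, contradicting the already-established failure of Conjecture~\ref{conj:liu} for this $D$. Hence Conjecture~\ref{conj:liudegree} must fail here too. Your dichotomy ``either positivity breaks for non-permutation diagrams, or the degree conjecture is false'' thus resolves to the second alternative --- and since the example at hand \emph{is} equivalent to a permutation diagram, positivity holds and there was never a third way out.
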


Liu proved Conjecture~\ref{conj:liudegree} when $D^{\vee}$ is a permutation diagram, and when $D^{\vee}$ has the property that if $(i,j_1), (i,j_2) \in D$ and $j_1 < j < j_2$, then $(i,j) \in D$. In light of the assertion of Theorem \ref{thm:magyar} that taking complements in the decomposition of $Sp_D$ gives the decomposition of $Sp_{D^\vee}$, it is tempting to gloss over the distinction between $D$ and $D^{\vee}$. In fact, the analogue of Theorem \ref{thm:magyar} fails for the classes $[X_D]$, and Conjecture~\ref{conj:liu} can fail for $D$ while holding for $D^{\vee}$.

Suppose $D = \{(1,1), (2,2), (3,3), (4,4)\}$, with $k = 4$ and $n = 8$. This is the skew shape $4321/321$. The Specht module $Sp_D$ is simply the regular representation of $S_4$, with
\begin{equation*}
Sp_D \simeq Sp_{1111} \oplus 3Sp_{211} \oplus 2Sp_{22} \oplus 3Sp_{31} \oplus Sp_{4}.
\end{equation*}
Theorem \ref{thm:magyar} then says
\begin{equation*}
Sp_{D^{\vee}} \simeq Sp_{3333} \oplus 3Sp_{4332} \oplus 2Sp_{4422} \oplus 3Sp_{4431} \oplus Sp_{444},
\end{equation*}
so $\dim Sp_{D^{\vee}} = f^{3333} + 3f^{4332} + 2f^{4422} + 3f^{4431} + f^{444} = 24024$.

On the other hand, an explicit calculation in Macaulay2 shows $\deg X_D = 21384$. Therefore Conjectures~\ref{conj:liudegree} and \ref{conj:liu} both fail for $D$. (One may wonder how such a seemingly small counterexample remained undetected. It is perhaps more natural to index diagram varieties by $D^{\vee}$ than $D$---notice that the cases mentioned above for which Conjecture~\ref{conj:liu} has been established all have the property that $D^{\vee}$, rather than $D$, falls into some nice class of diagrams---and from this point of view the counterexample is no longer so small.)

The discrepancy in degrees is $24024 - 21384 = 2640 = f^{4422}$, which hints at how to see this discrepancy more explicitly. Given a $k$-subset $I$ of $[n]$, write $p_I$ for the corresponding Pl\"ucker coordinate on $\Gr_k(n)$, so $p_I(A)$ is the maximal minor of $A$ in columns $I$. Let $Y$ be the subscheme determined by the vanishing of the Pl\"ucker coordinates $p_{1678}, p_{2578}, p_{3568}, p_{4567}$. These are exactly the Pl\"ucker coordinates which vanish on $X_D$. One can check by computer that $Y$ is a complete intersection, so that $[Y] = \sigma_{1}^4 = \sigma_{1111} + 3\sigma_{211} + 2\sigma_{22} + 3\sigma_{31} + \sigma_4$.

Since the four Pl\"ucker coordinates cutting out $Y$ vanish on $X_D^{\circ}$, the diagram variety $X_D$ is contained in $Y$. However, $Y$ has another component, namely the Schubert variety which is the closure of
\begin{equation*}
\left\{ \rowspan
\begin{bmatrix}
* & * & 1 & 0 & 0 & 0 & 0 & 0\\
* & * & 0 & 1 & 0 & 0 & 0 & 0\\
* & * & 0 & 0 & * & * & 1 & 0\\
* & * & 0 & 0 & * & * & 0 & 1
\end{bmatrix} \right\}.
\end{equation*}
This Schubert variety has degree $\dim Sp_{(22)^{\vee}} = f^{4422} = 2640$, which is $\deg Y - \deg X_D$. Therefore
\begin{equation*}
[X_D] = [Y] - \sigma_{22} = \sigma_{1111} + 3\sigma_{211} + \sigma_{22} + 3\sigma_{31} + \sigma_4.
\end{equation*}

Larger counterexamples to Conjecture~\ref{conj:liu} can be easily manufactured from this one. For two diagrams $D_1$ and $D_2$ where $D_1 \subseteq [a] \times [b]$, define
\begin{equation*}
D_1 \cdot D_2 = D_1 \cup \{(i+a,j+b) : (i,j) \in D_2\}.
\end{equation*}
Graphically, $D_1 \cdot D_2$ is the diagram
\begin{equation*}
\begin{tikzpicture}
\draw (0,0) rectangle (-1,1);
\draw (0,0) rectangle (1,-1);
\draw (-.5,.5) node {$D_1$};
\draw (.5,-.5) node {$D_2$};
\end{tikzpicture}\,\,\,\raisebox{.45in}{.}
\end{equation*}
One can show that $[X_{D_1 \cdot D_2}] = [X_{D_1}][X_{D_2}]$ and similarly that $s_{D_1 \cdot D_2} = s_{D_1} s_{D_2}$. Therefore if Conjecture~\ref{conj:liu} holds for $D_1$ but not $D_2$, then it will fail for $D_1 \cdot D_2$.

\begin{rem}
It is natural to wonder about the diagram
\begin{equation*}
D' = \{(1,1),(2,2),(3,3),(4,4),(5,5)\},
\end{equation*}
and whether Conjecture~\ref{conj:liu} fails for $D'$. Trying to repeat the analysis above runs into an immediate problem, however (I thank Ricky Liu for pointing this out). Namely, the analogue of $Y$, which is the scheme $Z$ cut out by 
\begin{equation*}
p_{1789(10)}, p_{2689(10)}, p_{3679(10)}, p_{4678(10)}, p_{56789}
\end{equation*}
no longer even has the same codimension as $X_D$. Indeed, $X_D$ has codimension $5$ but $Z$ contains the codimension $4$ Schubert cell
\begin{equation*}
\left\{ \rowspan \begin{bmatrix}
* & * & * & 1 & 0 & 0 & 0 & 0 & 0 & 0\\
* & * & * & 0 & 1 & 0 & 0 & 0 & 0 & 0\\
* & * & * & 0 & 0 & * & * & 1 & 0 & 0\\
* & * & * & 0 & 0 & * & * & 0 & 1 & 0\\
* & * & * & 0 & 0 & * & * & 0 & 0 & 1
\end{bmatrix} \right\}.
\end{equation*}
\end{rem}

\section{Cohomology classes of interval positroid varieties}
\label{sec:positroid-rank-varieties}

\subsection{Positroid varieties}

\begin{defn} An \emph{affine permutation} of quasi-period $n$ is a bijection $f : \ZZ \to \ZZ$ such that $f(i+n) = f(i) + n$ for all $i$. Write $\affS_n$ for the set of affine permutations with quasi-period $n$.
\end{defn}

Note that an $f \in \affS_n$ is completely determined by any sequence $f(a), f(a+1), \ldots, f(a+n-1)$, which we call a \emph{window}. We will usually specify an affine permutation $f \in \affS_n$ by giving the sequence $f(1), \ldots, f(n)$, so that $14825 \in \affS_5$ fixes $1$, sends $3$ to $8$, $7$ to $9$, etc. Members of any window are all distinct modulo $n$, so $\sum_{i=1}^n f(i) \equiv n(n+1)/2 \pmod{n}$. Let $\av(f)$ be the integer $\frac{1}{n}\sum_{i=1}^n (f(i) - i)$.

Write $\affS_n^k$ for the set of affine permutations with $\av(f) = k$. In particular, $\affS_n^0$ is a Coxeter group with simple generators $s_0, \ldots, s_{n-1}$, where $s_i$ interchanges $i+np$ and $i+1+np$ for every $p$. The groups $\affS_n^0$ are the affine Weyl groups of type $A$, and one should beware that affine permutations are frequently defined to be members of $\affS_n^0$ rather than by our broader definition. The shift map $\tau : \ZZ \to \ZZ$, $\tau(i) = i+1$ yields a bijection $\affS_n^0 \to \affS_n^k$ for each $k$, namely $f \mapsto \tau^k f$, and we will use these bijections to transport Coxeter structure from $\affS_n^0$ to any $\affS_n^k$. For instance, we define the reduced words of $f \in \affS_n^k$ to be the reduced words of $\tau^{-k}f \in \affS_n^0$. The next definition provides another example.

\begin{defn} The \emph{length} $\ell(f)$ of an affine permutation $f$ is the number of inversions $i < j$, $f(i) > f(j)$, provided that we regard any two inversions $i < j$ and $i + pn < j + pn$ as equivalent. \end{defn}
Clearly $\ell(\tau f) = \ell(f)$, and one checks that $\ell(f)$ agrees with the usual Coxeter length when $f \in \affS_n^0$.

\begin{defn} An affine permutation $f \in \affS_n$ is \emph{bounded} if $i \leq f(i) \leq i+n$ for all $i$. Let $\Bound(k,n)$ denote the set of bounded affine permutations in $\affS_n^k$. \end{defn}

The next proposition makes it easy to identify members of $\Bound(k,n)$.
\begin{prop} An affine permutation $f$ is in $\Bound(k,n)$ if and only if it is bounded and exactly $k$ of $f(1), \ldots, f(n)$ exceed $n$. \end{prop}

Any affine permutation $f$ has a permutation matrix, the $\ZZ \times \ZZ$ matrix $A$ with $A_{i,f(i)} = 1$ and all other entries $0$. For any $i,j \in \ZZ$, define
\begin{equation*}
[i,j](f) = \{p < i : f(p) > j\}. \label{eq:perm-diagram-northeast-count}
\end{equation*}
That is, $\#[i,j](f)$ is the number of $1$'s strictly northeast of $(i,j)$ in the permutation matrix of $f$, in matrix coordinates.

Fix a basis $e_1, \ldots, e_n$ of $\CC^n$. With this choice in mind, we adopt the following abuse of notation: if $X \subseteq \CC^n$, $\langle X \rangle$ will mean the span of $X$, while if $X \subseteq [n]$, $\langle X \rangle$ will mean the span of $\{e_i : i \in X\}$. For $X \subseteq [n]$, let $\Prj_X : \CC^n \to \langle X \rangle$ be the projection which fixes those basis vectors $e_i$ with $i \in X$ and sends the rest to $0$. For integers $i \leq j$, write $[i, j]$ for $\{i, i+1, \ldots, j\}$. We interpret indices of basis vectors modulo $n$, so that $\langle [i,j] \rangle \subseteq \CC^n$ even if $i, j$ fail to lie in $[1,n]$.

\begin{defn}[\cite{positroidjuggling}] \label{defn:positroid-variety}
Given a bounded affine permutation $f \in \Bound(k,n)$, the \emph{positroid variety} $\Pi_f \subseteq \Gr_k(n)$ is
\begin{equation*}
\{V \in \Gr_k(n) : \dim \Prj_{[i,j]} V \leq k - \#[i,j](f) \text{ for all $i \leq j$} \}.
\end{equation*}
\end{defn}

\begin{thm}[\cite{positroidjuggling}, Theorem 5.9] \label{thm:positroid-variety}
The positroid variety $\Pi_f \subseteq \Gr_k(n)$ is irreducible of codimension $\ell(f)$. \end{thm}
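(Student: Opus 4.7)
The plan is to exhibit the locally closed locus
\[\Pi_f^\circ = \{V \in \Gr(k,n) : \dim(\Proj_{[i,j]} V) = k - \#[i,j](f) \text{ for all } i \leq j\}\]
as the birational image of an irreducible smooth variety of dimension $k(n-k) - \ell(f)$. Since $\Pi_f$ is by definition the Zariski closure of $\Pi_f^\circ$, irreducibility of $\Pi_f^\circ$ immediately propagates to $\Pi_f$, and the dimension count yields $\codim \Pi_f = \ell(f)$.

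The natural candidate for the parameterizing space is an open Richardson variety in the finite flag variety $\Fl(n)$. Concretely, I would construct a combinatorial assignment $f \mapsto (v,w)$ where $v \leq w$ in Bruhat order on $S_n$ and $w$ is a minimal coset representative for $S_n/(S_k \times S_{n-k})$ (so that the Schubert variety $X_w \subseteq \Fl(n)$ projects to a Schubert variety in $\Gr(k,n)$ of the expected dimension). The recipe must convert the rank data $\#[i,j](f)$ into the flag-rank conditions cutting out the open Richardson $R_v^w$. I would then verify inductively on $\ell(f)$, tracking the effect of an affine simple reflection applied to $f$, the length identity
\[\ell(w) - \ell(v) = k(n-k) - \ell(f).\]

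Next, I would appeal to the classical fact (Deodhar, Kazhdan--Lusztig; alternatively a transverse-intersection argument with $X_w$ and the opposite Schubert variety of $v$) that $R_v^w$ is smooth and irreducible of dimension $\ell(w)-\ell(v)$. Finally, I would check that the natural projection $\pi:\Fl(n) \to \Gr(k,n)$ restricts to a birational map $R_v^w \to \Pi_f^\circ$: surjectivity follows from the rank-data translation, while the generic fiber is a single point because $v$ is chosen so that the intermediate flag data can be reconstructed from the image $k$-plane by reading off the jumps of the rank function $j \mapsto \dim(\Proj_{[i,j]} V)$.

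The main obstacle is the first step: specifying the bijection between bounded affine permutations and Bruhat pairs compatible with ranks, and establishing the length identity. The cleanest bookkeeping is via Postnikov's bijection between $\Bound(k,n)$ and decorated permutations on $[n]$, which repackages the cyclic rank data of $f$ as finite permutation data from which $(v,w)$ and the length identity can be read off directly. Everything downstream---irreducibility, smoothness of the Richardson, birationality of $\pi$---is then standard, so the genuine content of the theorem lies in this combinatorial dictionary between juggling patterns and Bruhat intervals.
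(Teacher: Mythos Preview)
The paper does not prove this statement; it is quoted verbatim as Theorem~5.9 of \cite{positroidjuggling} and used as a black box. So there is no ``paper's own proof'' to compare against.

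That said, your outline is essentially the argument of Knutson--Lam--Speyer in the cited reference: positroid varieties are realized as projections of Richardson varieties $R_v^w \subseteq \Fl(n)$, and the irreducibility and dimension formula then follow from the corresponding facts for open Richardsons together with the length identity $\ell(w)-\ell(v)=k(n-k)-\ell(f)$. Two small caveats. First, be careful with the coset condition: in \cite{positroidjuggling} the pair $(v,w)$ is chosen so that one of the two permutations (their convention makes it $w$) is a Grassmannian permutation, and under that hypothesis the projection $\pi$ restricted to the open Richardson is in fact an \emph{isomorphism} onto the open positroid, not merely birational---this is the content of their Lemma/Proposition showing the map has connected (indeed trivial) fibers. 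Your heuristic ``reconstruct the intermediate flag from the jumps of $j\mapsto\dim\Proj_{[i,j]}V$'' is the right intuition, but the honest verification goes through the explicit parametrization of $\Pi_f^\circ$ by distinguished subexpressions (Deodhar/Marsh--Rietsch), not just a generic-fiber count. Second, the bijection $f\leftrightarrow(v,w)$ and the length identity are indeed the combinatorial heart of the matter, and your plan to route this through decorated permutations is exactly what \cite{positroidjuggling} does (building on Postnikov).
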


Knutson--Lam--Speyer also computed the cohomology class of $\Pi_f$ in terms of affine Stanley symmetric functions. These are a class of symmetric functions indexed by affine permutations introduced by Lam in \cite{lam-affine-stanley}, which we now define.

A \emph{reduced word} for $f \in \affS_n^0$ is a word $a_1 \cdots a_{\ell}$ in the alphabet $[0,n-1]$ with $s_{a_1} \cdots s_{a_{\ell}} = f$ and such that $\ell$ is minimal with this property. Let $\Red(f)$ denote the set of reduced words for $f$. A reduced word $a = a_1 \cdots a_{\ell}$ is \emph{cyclically decreasing} if all the $a_i$ are distinct, and if whenever some $j$ and $j+1$ appear in $a$ (modulo $n$), $j+1$ precedes $j$. An affine permutation is cyclically decreasing if it has a cyclically decreasing reduced word. For a partition $\lambda$, let $m_{\lambda}$ be the monomial symmetric function indexed by $\lambda$.

\begin{defn} \label{defn:affine-stanley}
The \emph{affine Stanley symmetric function} of $f \in \affS_n^0$ is
\begin{equation*}
\tilde{F}_f = \sum_{(f^1, \ldots, f^p)} x_1^{\ell(f^1)} \cdots x_p^{\ell(f^p)},
\end{equation*}
where $(f^1, \ldots, f^p)$ runs over all factorizations $f = f^1 \cdots f^{p}$ with each $f_i$ cyclically decreasing.
\end{defn}
As above, we extend this definition to $f \in \affS_n^k$ for arbitrary $k$ by defining $\tilde{F}_f$ as $\tilde{F}_{\tau^{-k}f}$.

\begin{thm}[\cite{positroidjuggling}, Theorem 7.1] \label{thm:positroid-variety-class}
For $f \in \Bound(k,n)$, the cohomology class $[\Pi_f]$ is $\phi(\tilde{F}_f)$.
\end{thm}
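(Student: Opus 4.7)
The plan is to reduce the statement to a computation in the flag variety and then push forward. As a preliminary step I would verify that every positroid variety $\Pi_f$ is birationally the image, under the projection $\pi : \Fl(n) \to \Gr(k,n)$, of a Richardson variety $R_{u,v}$ in $\Fl(n)$ for permutations $u, v \in S_n$ read off explicitly from the bounded affine permutation $f$ (for instance, by translating the rank conditions $\dim(\Proj_{[i,j]} V) = k - \#[i,j](f)$ into intersections of Schubert and opposite Schubert cells and then projecting). Birationality then yields $[\Pi_f] = \pi_* [R_{u,v}]$ in $H^*(\Gr(k,n))$.

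Next I would expand the Richardson class in the Schubert basis of $H^*(\Fl(n))$:
\[
[R_{u,v}] = [X_u] \cdot [X^v] = \sum_w d^w_{u,v}\,[X_w],
\]
where the $d^w_{u,v}$ are the (nonnegative integer) Schubert structure constants. I would then apply the classical Lascoux--Schützenberger/Fomin--Stanley description of the pushforward under $\pi$: the class $\pi_*[X_w]$ is represented by the ordinary Stanley symmetric function $F_{w^{-1}}$, specialized via $\phi$ so that Schur functions not fitting inside $(k^{n-k})$ are killed. Combining these steps gives
\[
[\Pi_f] = \phi\Bigl(\sum_w d^w_{u,v}\,F_{w^{-1}}\Bigr).
\]

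The remaining task is to identify this sum with $\tilde{F}_f$, i.e., to prove the symmetric-function identity $\tilde{F}_f = \sum_w d^w_{u,v}\,F_{w^{-1}}$. I would attempt this inside the (affine) nilCoxeter algebra: $\tilde{F}_f$ is defined as a generating function over factorizations of $f$ into cyclically decreasing pieces, and I would try to match such factorizations bijectively with pairs consisting of a Schubert constant contribution to $d^w_{u,v}$ together with a decreasing factorization of $w$. I expect this matching of affine with non-affine combinatorics to be the main obstacle. A plausible alternative, avoiding the explicit bijection, is to show that both sides satisfy the same recursion under Monk-type divided difference operators and then invoke uniqueness; the natural base case is provided by the Schubert varieties among the $\Pi_f$, where $\tilde{F}_f$ reduces to a single Schur function and both sides evaluate to the corresponding Schubert class $\sigma_\lambda$.
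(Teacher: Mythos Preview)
This theorem is not proved in the present paper; it is quoted from Knutson--Lam--Speyer and used as a black box. So there is no proof here to compare your proposal against.

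Independently of that, your outline has a genuine error at the key step. You assert that under $\pi:\Fl(n)\to\Gr(k,n)$ the pushforward $\pi_*[X_w]$ is $\phi(F_{w^{-1}})$. This is false already for degree reasons: $\pi_*$ lowers cohomological degree by twice the relative dimension $d=\binom{n}{2}-k(n-k)$, so $\pi_*[X_w]$ lives in $H^{2\ell(w)-2d}$, whereas $\phi(F_{w^{-1}})$ lives in $H^{2\ell(w)}$. In fact $\pi_*[X_w]$ is either zero or a single Grassmannian Schubert class $\sigma_\lambda$, the latter precisely when $w$ is a minimal-length coset representative for $S_n/(S_k\times S_{n-k})$. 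Stanley symmetric functions arise as stable limits of Schubert polynomials, which is a pullback-type statement; there is no ``Lascoux--Sch\"utzenberger/Fomin--Stanley description of the pushforward'' of the form you invoke.

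If you carry the correct pushforward through your argument, you get $[\Pi_f]=\sum_\lambda d^{\,w_\lambda}_{u,v}\,\sigma_\lambda$, summed only over $k$-Grassmannian permutations $w_\lambda$. The theorem then becomes the assertion that these specific Richardson structure constants coincide with the Schur coefficients of $\tilde F_f$ after truncation to the $k\times(n-k)$ box. That is exactly the nontrivial content of the Knutson--Lam--Speyer result, and their proof passes through the geometry of the affine flag variety and Lam's identification of homology Schubert classes of the affine Grassmannian with symmetric functions, not through a bijection or Monk recursion on ordinary Stanley symmetric functions of the kind you sketch.
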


The ordinary Stanley symmetric functions indexed by members of $S_n$, introduced by Stanley in \cite{stanleysymm}, are examples of affine Stanley symmetric functions. To be precise, we can view $w \in S_n$ as the affine permutation in $\affS^0_n$ sending $i+pn$ to $w(i)+pn$ for $1 \leq i \leq n$. Then the Stanley symmetric function $F_w$ of $w$ is $\tilde{F}_w$. This is Proposition 5 in \cite{lam-affine-stanley}, but we will simply take it as a definition of $F_w$. One should be aware, however, that the $F_w$ defined in \cite{stanleysymm} is our $F_{w^{-1}}$.

\subsection{Grassmann duality}
Let $\Gr^k(n)$ be the Grassmannian of $k$-planes in $(\CC^n)^*$. The \emph{annihilator} of $V \in \Gr_k(n)$ is
\begin{equation*}
\ann(V) = \{\alpha \in (\CC^n)^* : \alpha|_V = 0\} \in \Gr^{n-k}(n).
\end{equation*}
The map $\Gr_k(n) \to \Gr^{n-k}(n)$ sending $V$ to $\ann(V)$ is an isomorphism, and we will refer to a pair of closed subvarieties which correspond under this isomorphism as \emph{Grassmann duals}.

Let $\epsilon_1, \ldots, \epsilon_n$ denote the dual basis of $e_1, \ldots, e_n$. For $S \subseteq [n]$, we write $\bar{S}$ for $[n] \setminus S$ and $\langle S^* \rangle$ for $\langle \epsilon_i : i \in S \rangle$. Observe that if $f \in \Bound(k,n)$, then $\tau^n f^{-1} \in \Bound(n{-}k,n)$.

\begin{lem}[\cite{knutson-interval-positroid-varieties}, Proposition 2.1] \label{lem:grassmann-duality} For $f \in \Bound(k,n)$, the positroid varieties $\Pi_f \subseteq \Gr_k(n)$ and $\Pi_{\tau^n f^{-1}} \subseteq \Gr^{n-k}(n)$ are Grassmann dual. \end{lem}

Lemma~\ref{lem:grassmann-duality} is straightforward given the following technical lemma, which will also be useful later on.
\begin{lem} \label{lem:NE-vs-SW} For $f \in \Bound(k,n)$ and $i \leq j \leq i+n$, let $a = \#[i,j](f)$ and $b$ be the number of $1$'s in the permutation matrix of $f$ which are strictly northeast and weakly southwest of $(i,j)$, respectively. Then $\#[i,j] + a = k + b$. \end{lem}
\begin{proof}
Consider the following part of the permutation matrix of $f$, divided into four regions:
\begin{center}
\begin{tikzpicture}
\draw (0,0) -- (5,-5);
\draw (5,0) -- (10,-5);

\draw[dotted] (0,0) -- (5,0);
\draw (5,-5) -- (10,-5);

\draw[dotted] (2,-1.96) -- (6.96,-1.96);
\draw (2.04,-2.04) -- (7.04,-2.04);

\draw (4.96,0) -- (4.96,-4.96);
\draw[dotted] (5.04,-.04) -- (5.04,-5);

\filldraw (5,-2) circle (1pt) node[below left] {$\scriptstyle (i,j)$};
\filldraw (2,-2) circle (1pt) node[left] {$\scriptstyle (i,i)$};
\filldraw (7,-2) circle (1pt) node[right] {$\scriptstyle (i,i+n)$};
\filldraw (5,0) circle (1pt) node[right] {$\scriptstyle (j-n,j)$};
\filldraw (0,0) circle (1pt) node[left] {$\scriptstyle (j-n,j-n)$};
\filldraw (5,-5) circle (1pt) node[left] {$\scriptstyle (j,j)$};
\filldraw (10,-5) circle (1pt) node[right] {$\scriptstyle (j,j+n)$};

\draw (3, -1) node {$C$};
\draw (5.5,-1.5) node {$A$};
\draw (4,-3) node {$B$};
\draw (6.5,-3.5) node {$D$};
\end{tikzpicture}
\end{center}
Here a line segment on the boundary of a region is included in the region if the segment is solid, and not included if it is dotted. For instance, $C = \{(p,q) : j{-}n < p < i, p \leq q \leq j\}$. Let $a, b, c, d$ denote the number of $1$'s in the regions $A, B, C, D$. Boundedness of $f$ implies that all the $1$'s in its permutation matrix lie (weakly) between the two diagonal lines in this picture, so since $B \cup D$ contains $\#[i,j]$ rows we have $b+d = \#[i,j]$. Since $f \in \Bound(k,n)$, exactly $k$ of $f(1), \ldots, f(n)$ exceed $n$, and by quasi-periodicity this says $a+d = k$.  But now $\#[i,j] + a = b+d+a = k+b$.
\end{proof}

\begin{proof}[Proof of Lemma~\ref{lem:grassmann-duality}]
Take $V \in \Gr_k(n)$. We claim that for any cyclic interval $[i,j]$ in $[n]$,
\begin{equation*}
    \dim \Prj_{[i,j]} V \leq k-\#[i,j](f) \quad \Longleftrightarrow \quad \dim \Prj_{\overline{[i,j]}^*} \ann(V) \leq (n-k)-\#\overline{[i,j]}(\tau^n f^{-1}),
\end{equation*}
which will prove the lemma according to Definition~\ref{defn:positroid-variety}. For any $S \subseteq [n]$, the rank of the composite $V \hookrightarrow \CC^n \twoheadrightarrow \CC^n / \langle \bar{S} \rangle$ is $\dim \Prj_S V$, and by dualizing one sees that this is the same as $\#S - (n{-}k) + \dim \Prj_{\bar{S}^*} \ann(V)$. Taking $S = [i,j]$,
\begin{equation*}
    \dim \Prj_{[i,j]} V \leq k-\#[i,j](f) \quad \Longleftrightarrow \quad \dim \Prj_{\overline{[i,j]}^*} \ann(V) \leq n-\#[i,j]-\#[i,j](f).
\end{equation*}
Thus, to prove the claim we must show that
\begin{equation} \label{eq:grassmann-duality}
\#[i,j] + \#[i,j](f) = k + \#\overline{[i,j]}(\tau^n f^{-1}).
\end{equation}
The permutation matrix of $f$ is the permutation matrix of $\tau^n f^{-1}$ shifted left $n$ units and reflected across the diagonal of $\ZZ \times \ZZ$, and so $\#\overline{[i,j]}(\tau^n f^{-1}) = \#[j{+}1,n{+}i{-}1](\tau^n f^{-1})$ is the number of $1$'s weakly southwest of $(i,j)$ in the permutation matrix of $f$.  Lemma~\ref{lem:NE-vs-SW} now implies equation \eqref{eq:grassmann-duality}.
\end{proof}

\subsection{Interval positroid varieties} \label{subsec:interval-positroid}

An \emph{interval positroid variety} is one for which all rank conditions in Definition~\ref{defn:positroid-variety} are implied by conditions involving actual intervals in $[n]$.

\begin{thm}[\cite{knutson-interval-positroid-varieties}] \label{thm:interval-positroid-classification} For $f \in \Bound(k,n)$, $\Pi_f$ is an interval positroid variety if and only if the subsequence of $f(1), \ldots, f(n)$ consisting of the entries exceeding $n$ is increasing. 
\end{thm}
Any $f$ as in the preceding theorem is determined by the subsequence of $f(1), \ldots, f(n)$ of entries not exceeding $n$, which is a \emph{partial permutation}, i.e. an injection from a subset of $[n]$ into $[n]$. Let $\bar{f}$ denote the partial permutation associated to $f \in \Bound(k,n)$ in this way. For instance, if $f = 15748$ then $\bar{f} = 15\mathunderscore 4\mathunderscore$, where a $\mathunderscore$ in position $i$ indicates that $i$ is not in the domain of $\bar{f}$. Conversely, if the domain $\dom(\bar{f})$ has size $n-k$ and $\bar{f}(i) \geq i$ for $i \in \dom(\bar f)$, then $\bar{f}$ labels an interval positroid variety.  We now describe a different way to index interval positroid varieties, following \cite{rank-varieties} (up to Grassmann duality).
\begin{defn}[\cite{rank-varieties}] A \emph{rank set} in $[n]$ is a finite set of intervals $M = \{[a_1,b_1], \ldots, [a_m,b_m]\}$ with $a_i \leq b_i \leq n$ positive integers, where all $a_i$ are distinct and all $b_i$ are distinct. For $S \subseteq [n]$, let $S(M)$ denote the set of intervals $S' \in M$ such that $S' \subseteq S$.\end{defn}

To a rank set $M$ in $[n]$ with $n-k$ intervals we associate the variety
\begin{equation*}
\Pi_M = \{V \in \Gr_k(n) : \text{$\dim \Prj_S V \leq \#S - \#S(M)$ for all intervals $S \subseteq [n]$}\}.
\end{equation*}
This is in fact an interval positroid variety, labelled by the affine permutation constructed as follows. Say $M = \{[a_1, b_1], \ldots, [a_{n-k}, b_{n-k}]\}$ is a rank set with $a_1 < \cdots < a_{n-k} \leq n$. Define
\begin{equation*}
\{c_1 < \cdots < c_{k}\} = [n] \setminus \{a_1, \ldots, a_{n-k}\} \quad \text{ and} \quad \{d_1 < \cdots < d_{k}\} = [n] \setminus \{b_1, \ldots, b_{n-k}\}.
\end{equation*}
Let $f_M \in \affS_n$ be the affine permutation which maps $a_i$ to $b_i$ and $c_i$ to $d_i+n$. Then $f_M$ is bounded because $a_i \leq b_i$, which implies $d_i \leq c_i$.

\begin{ex}
Take $M = \{[1,1],[2,5],[4,4]\}$ and $n = 5$. Then $f_M = 15748$ and $\bar{f}_M = 15\mathunderscore 4\mathunderscore$.
\end{ex}

\begin{lem} \label{lem:rank-positroid-correspondence} For a rank set $M$ in $[n]$ we have $\Pi_M = \Pi_{f_M}$. \end{lem}
\begin{proof}
By construction, the entries of $f_M(1), \ldots, f_M(n)$ exceeding $n$ appear in increasing order, so $\Pi_M$ is an interval positroid variety by Theorem~\ref{thm:interval-positroid-classification}. Therefore it suffices to show that $\#[i,j] - \#[i,j](M) = k - \#[i,j](f_M)$ for all intervals $[i,j]$ in $[n]$.

Let $B = \{q \in \ZZ : \text{$(q, f_M(q))$ is weakly southwest of $(i,j)$}\}$. We claim that $\#[i,j](M) = \#B$, in which case we are done by Lemma~\ref{lem:NE-vs-SW}.  Clearly $[a_p, b_p] = [a_p, f_M(a_p)] \subseteq [i,j]$ if and only if $a_p \in B$, so $\#[i,j](M) = B \cap \{a_1, \ldots, a_{n-k}\}$. But in fact every $q \in B$ is some $a_p$, because $f_M(q) \leq j \leq n$ and $1 \leq i \leq q$ force $q \in \{a_1, \ldots, a_{n-k}\}$.
\end{proof}

It follows from Theorem~\ref{thm:positroid-variety} that $\Pi_M$ is irreducible of dimension $k(n-k) - \ell(f_M)$. The next lemma gives a formula for this dimension more directly in terms of $M$ (cf. \cite[Lemma 3.29]{two-step-flags}).
\begin{lem} \label{lem:dimensions} For any rank set $M$, $\dim \Pi_M = \sum_{S \in M} (\#S - \#S(M))$. \end{lem}

\begin{proof}
As before, write $M = \{[a_1, b_1], \ldots, [a_{n-k}, b_{n-k}]\}$ where $a_1 < \cdots < a_{n-k}$. Also, write $\dim(M)$ for $\sum_{S \in M} (\#S - \#S(M))$, so we want to prove that $\dim(M) = \dim \Pi_M$. Let $i(M)$ be the maximal $i \in [n-k]$ such that $a_i < k+i$; if no such $i$ exists, set $i(M) = -\infty$. When $i(M)$ is finite, we will define a new rank set $M'$ with the property that either $\dim(M') < \dim(M)$, or $\dim(M') = \dim(M)$ and $i(M') < i(M)$. Thus, after finitely many operations of the form $M \mapsto M'$ we obtain an $M''$ with $i(M'') = -\infty$, which must be $M'' = \{\{k+1\}, \{k+2\}, \ldots, \{n\}\}$. In this case $f_{M''} = (n+1)\cdots(n+k)(k+1)\cdots n$ has length $k(n-k)$, so $\dim \Pi_{M''} = 0$ and the lemma holds. It therefore suffices to show that $\dim(M)-\dim(M') = \dim \Pi_M - \dim \Pi_{M'}$.
\begin{enumerate}[(a)]
\item First suppose $a_i < b_i$.  Let $M'$ be $M$ with $S = [a_i, b_i]$ replaced by $S' = [a_i+1, b_i]$. The choice of $i$ implies that $a_i + 1$ remains in $[n]$ and is not the left endpoint of an interval in $M$, so $M'$ is a valid rank set. Moreover, the multiset of numbers $\#T(M)$ for $T \in M$ is the same as the multiset of numbers $\#T'(M')$ for $T' \in M'$, so $\dim(M)-\dim(M') = 1$. On the other hand, $f_M$ and $f_{M'}$ agree except in positions $a_{i}$ and $a_{i}+1$, where
\begin{equation*}
    \begin{array}{ll}
        f_M(a_{i}) = b_{i}, & f_M(a_{i}+1) = d_j+n \text{ (for some $j$)}\\
        f_{M'}(a_{i}) = d_j+n, & f_{M'}(a_{i}+1) = b_{i}.
    \end{array}
\end{equation*}
In particular, $f_{M'} = f_M s_{a_{i}} > f_M$ in weak Bruhat order, so
\begin{equation*}
    \dim \Pi_M - \dim \Pi_{M'} = \ell(f_{M'}) - \ell(f_M) = 1 = \dim(M) - \dim(M').
\end{equation*}

\item Now suppose $a_i = b_i$.
\begin{enumerate}[(i)]
    \item Suppose $a_i+1$ is not the \emph{right} endpoint of an interval. Define $M'$ to be $M$ with $[a_i, a_i]$ replaced by $[a_i+1,a_i+1]$. Then $M'$ is a valid rank set with $\dim M' = \dim M$, On the other hand, $\Pi_{M'}$ is the image of $\Pi_M$ under the invertible linear map switching $e_{a_i}$ with $e_{a_i+1}$ and fixing all other $e_j$, and so $\dim \Pi_{M'} = \dim \Pi_M$.
    
    \item Suppose $a_i+1 = b_h$ for some $h$. Define $M'$ to be $M$ with $[a_i, a_i]$ replaced by $[a_i+1, a_i+1]$ and $[a_h, b_h]$ replaced by $[a_h, b_h-1] = [a_h, a_i]$. This is a valid rank set, and one checks that $\dim(M) = \dim(M')$ again. The affine permutations $f_M$ and $f_{M'}$ agree except that
    \begin{equation*}
        \begin{array}{lll}
            f_M(a_h) = a_i+1, & f_M(a_i) = a_i, & f_M(a_i+1) = d_j+n \text{ (for some $j$)}\\
            f_{M'}(a_h) = a_i, & f_{M'}(a_{i}) = d_j+n, &f_{M'}(a_i+1) = a_i+1
        \end{array}
    \end{equation*}
    Hence, $f_{M'} = s_{a_i} f_M s_{a_i}$ with $f_M < f_M s_{a_i} > s_{a_i} f_M s_{a_i}$ in weak Bruhat order. In particular, $\ell(f_M) = \ell(f_{M'})$ so that $\dim \Pi_M = \dim \Pi_{M'}$.
\end{enumerate}
In either case, $\dim(M) = \dim(M')$ and $\dim \Pi_M = \dim \Pi_{M'}$. If $a_i+1 < k+i$, then $i(M') = i(M)$, but after $k+i-a_i$ steps of type (b) the statistic $i$ must decrease.
\end{enumerate}
\end{proof}

\subsection{Stability}

Fix inclusions $\CC \subseteq \CC^2 \subseteq \cdots$ and linearly independent vectors $e_1, e_2, \ldots$ with $e_i \in \CC^i$ for all $i$. Let $R_{k,n}$ denote the homogeneous coordinate ring of $\Gr_k(n)$ under the Pl\"ucker embedding, so $R_{k,n}$ is generated by Pl\"ucker coordinates $p_I$ for $I \in {[n] \choose k}$. Any Pl\"ucker relation in $R_{k,n}$ is still a Pl\"ucker relation in $R_{k,n+1}$, so there are injective ring homomorphisms $R_{k,n} \hookrightarrow R_{k,n+1} \hookrightarrow \cdots$ sending $p_I$ to $p_I$, which we view as inclusions. Given a subscheme $Z \subseteq \Gr_k(n)$ determined by a homogeneous ideal $J \subseteq R_{k,n}$, let $Z^+$ be the subscheme of $\Gr_k(n+1)$ determined by the ideal $R_{k,n+1}J$. That is, $Z^+$ is cut out by the same equations as $Z$, but now inside $\Gr_{k}(n+1)$.

\begin{prop} \label{prop:pullback} Let $\iota : \Gr_k(n) \to \Gr_k(n{+}1)$ be the inclusion, inducing a pullback $\iota^* : H^* \Gr_k(n{+}1) \to H^* \Gr_k(n)$. Then $\iota^*[Z^+] = [Z]$. \end{prop}

\begin{proof} Whenever $Y \subseteq \Gr_k(n+1)$ intersects $\iota \Gr_k(n)$ transversely it holds that $\iota^* [Y] = [Y \cap\iota \Gr_k(n)]$ with $[Y \cap \iota\Gr_k(n)]$ viewed as a cycle on $\Gr_k(n)$, and one can verify that $Z^+$ intersects $\iota \Gr_k(n)$ transversely by working in charts. \end{proof}

Let $\Lambda_k$ be the ring of symmetric polynomials over $\ZZ$ in $x_1, \ldots, x_k$. Then $H^* \Gr_k(n) \simeq \Lambda_k / (s_{\lambda} : \lambda \not\subseteq [k] \times [n-k])$, and these isomorphisms induce an isomorphism of the inverse limit $\displaystyle \varprojlim_N H^* \Gr_k(N)$ with $\Lambda_k$. Here, we take the inverse limit with respect to the maps
\begin{equation*}
\cdots \xrightarrow{\iota^*} H^*\Gr_k(k+1) \xrightarrow{\iota^*} H^*\Gr_k(k).
\end{equation*}
Proposition~\ref{prop:pullback} shows that the classes $[Z], [Z^+], [Z^{++}], \ldots$ define an element $\alpha \in \varprojlim H^* \Gr_k(N)$, and we say $F \in \Lambda_k$ is a \emph{stable} representative for $[Z]$ if it represents $\alpha$.

Now suppose $M$ is a rank set for $\Gr_k(n)$. Define $M^+$ to be $M \cup \{[a,n+1]\}$ where $a$ is the minimal member of $[n+1]$ which is not a left endpoint in $[n]$. Evidently $M^+$ is a rank set for $\Gr_k(n+1)$.
\begin{lem} \label{lem:stable-interval-positroid} $\Pi_{M^+} = \Pi_M^+$. \end{lem}
\begin{proof}
Let $S \subseteq [n+1]$ be an interval, and consider a rank condition
\begin{equation} \label{eq:rank-condition}
\dim \Prj_S V \leq \#S - \#S(M^+)
\end{equation}
for $\Pi_{M^+}$. We must see that \eqref{eq:rank-condition} follows from the rank conditions defining $\Pi_M$. Consider three cases.
\begin{enumerate}[(a)]
\item If $n+1 \notin S$, then $S(M^+) = S(M)$, and \eqref{eq:rank-condition} is itself a rank condition defining $\Pi_M$.
\item Suppose $S = [i,n+1]$ with $i > a$, and set $S' = [i,n]$. Then $\#S - \#S(M^+) = \#S' - \#S'(M) + 1$, so \eqref{eq:rank-condition} follows from the rank condition $\dim \Prj_{S'} V \leq \#S' - \#S'(M)$ for $\Pi_M$.
\item Suppose $S = [i,n+1]$ with $i \leq a$. Then $S$ contains every interval of $M^+$ except $[1, b_1], \ldots, [i-1, b_{i-1}]$, and so $\#S - \#S(M^+) = \#[i,n+1] - (\#M^+ - (i-1)) = k$: the rank condition \eqref{eq:rank-condition} is vacuous.
\end{enumerate}
\end{proof}

Let $M^{+r}$ denote the result of applying the $^+$ operation $r$ times starting with $M$; when $f = f_M$, we also write $f^{+r}$ and $\bar f^{+r}$ to mean $f_{M^{+r}}$ and $\bar f_{M^{+r}}$. Write $S_{\infty}$ for the union $\bigcup_{n=0}^{\infty} S_n$, identifying $S_n$ with the subgroup of $S_{n+1}$ fixing $n+1$.
\begin{lem} \label{lem:stable-rank-sets} Let $M$ be a rank set for $\Gr_k(n)$. There exists an integer $R$ such that
\begin{itemize}
\item $f_M^{+r}\tau^{-k} \in S_{n+r}$ for $r \geq R$, and
\item the permutations $f_M^{+r}\tau^{-k}$ for $r \geq R$ are all the same as members of $S_{\infty}$.
\end{itemize}
\end{lem}
\begin{proof} Suppose first $\bar f_M$ has domain $[1,n-k]$, so $\bar f_M = b_1 \cdots b_{n-k} \mathunderscore \ldots \mathunderscore$. Then $f_M \tau^{-k} = d_1 \cdots d_k b_1 \cdots b_{n-k}$ is in $S_n$. In general, $\bar f^+$ is the partial permutation of $[n+1]$ agreeing with $\bar f$ on $\dom(\bar f)$, and sending the minimal member of $[n+1] \setminus \dom(\bar f)$ to $n+1$. Thus, $f_M^+ \tau^{-k} = d_1 \cdots d_k b_1 \cdots b_{n-k} (n+1)$, which is equal to $f_M \tau^{-k}$ as a member of $S_{\infty}$.

For an arbitrary $\bar f_M$, it suffices by the previous paragraph to find $R$ such that $\bar f_M^{+R}$ has domain $[1,n+R-k]$. Any $R$ such that $\dom(\bar f_M) \subseteq [1, R + \#\dom(\bar f_M)]$ does the job.
\end{proof}

\begin{thm} \label{thm:interval-positroid-class} For any interval positroid variety $\Pi_M$, there is an ordinary permutation $w$ such that the Stanley symmetric function $F_w$ is a stable representative for the class $[\Pi_M]$. \end{thm}

\begin{proof} Since the reduced words of a permutation $w$ only depend on $w$ as an element of $S_{\infty}$, the same is true of $F_w$. Lemma~\ref{lem:stable-rank-sets} therefore shows that the sequence $\tilde{F}_{f_M^{+r}}$ for $r \geq 0$ is eventually constant and equal to some $F_w$. These symmetric functions represent the classes $[\Pi_M^{+r}]$ by Lemma~\ref{lem:stable-interval-positroid}, so $F_w$ stably represents the class $[\Pi_M]$.
\end{proof}

Although $\phi(\tilde{F}_f)$ must be Schubert-positive, and it is known that $F_w$ is Schur-positive \cite{edelman-greene}, the symmetric functions $\tilde{F}_f$ are \emph{not} always Schur-positive. For instance, if $M = \{[2,2], [4,4]\}$ with $\Sigma_M \subseteq \Gr_2(4)$, then $f_M = 5274$, and $\tilde{F}_{5274} = s_{22} + s_{211} - s_{1111}$. On the other hand, $M^{++} = \{[2,2],[4,4],[1,5],[3,6]\}$, $f_M^+ = 526479$, and $\tilde{F}_{526479} = F_{135264} = s_{22} + s_{211}$. Thus, Theorem~\ref{thm:interval-positroid-class} provides a canonical way to represent interval positroid classes by Schur-positive symmetric functions.

\section{Degenerations of dual interval positroid varieties}

Given a subset $E \subseteq [k] \times [n]$, define
\begin{equation*}
\Sigma^\circ_E = \{\rowspan A : A \in M_{k,n} \text{ such that $A_{pq} = 0$ whenever $(p,q) \notin E$}\} \subseteq \Gr_k(n)
\end{equation*}
and $\Sigma_E = \overline{\Sigma^\circ_E}$. For a generic $V = \rowspan A \in \Sigma^\circ_E$, the matroid of $V$ is the \emph{transversal matroid} associated to the columns of $E$: that is, the matroid on $[n]$ whose bases are the sets $\{j_1, \ldots, j_k\}$ for which $(1,j_1), \ldots, (k,j_k) \in E$. Thus, $\Sigma_E$ is the closure of a matroid stratum.

We identify a rank set (or any collection of intervals) $M = \{S_1, \ldots, S_k\}$ in $[n]$ with the subset
\begin{equation*}
\{(i,j) : i \in [k], j \in S_i\} \subseteq [k] \times [n],
\end{equation*}
and define $\Sigma_M$ accordingly. For instance, if $M = \{[1,3], [3,6], [4,5]\}$ and $n = 6$, then $\Sigma_M^\circ$ is the set of rowspans of full rank matrices of the form
\begin{equation*}
\begin{bmatrix}
* & * & * & 0 & 0 & 0\\
0 & 0 & * & * & * & *\\
0 & 0 & 0 & * & * & 0
\end{bmatrix}.
\end{equation*}
The varieties $\Sigma_M$ are the ``rank varieties'' defined in \cite{rank-varieties}, where it is shown that they are exactly the projections of Schubert varieties in partial flag varieties $\Fl(k_1, \ldots, k_p; \CC^n)$ with $k_p = k$ to $\Gr_k(n)$ (see also \cite{two-step-flags}).

\begin{lem} \label{lem:rank-variety-dimension} $\dim \Sigma_M = \sum_{S \in M} (\#S - \#S(M))$ for a rank set $M$. \end{lem}
\begin{proof}
    Write $M = \{[a_1, b_1], \ldots, [a_k, b_k]\}$ where $a_1 < \cdots < a_k$. Let $V$ be the set of $k \times (n-k)$ matrices $A$ such that
    \begin{itemize}
        \item $A_{i,a_i} = 1$ for each $i$;
        \item If $j \notin [a_i, b_i]$, then $A_{ij} = 0$;
        \item If $[a_\ell,b_\ell] \subseteq [a_i,b_i]$ with $\ell \neq i$, then $A_{i,a_\ell} = 0$;
        \item If $A_{ij}$ has not been defined already, it is nonzero.
    \end{itemize}
    For example, if $M = \{[1,4],[2,6],[4,5]\}$, then
    \begin{equation*}
    V = \left\{ \begin{bmatrix} 1 & * & * & * & 0 & 0 \\ 0 & 1 & * & 0 & * & * \\ 0 & 0 & 0 & 1 & * & 0 \end{bmatrix} : \text{all $*$ nonzero} \right\}
    \end{equation*}
    Note that $\dim V = \sum_{S \in M} (\#S - \#S(M))$. The map $A \mapsto \rowspan(A)$ takes $V$ onto a dense subset of $\Sigma_M^\circ$, so to prove the lemma it suffices to show that this map is injective, i.e. that if $A, gA \in V$ for some $g \in \GL_k(\CC)$, then $g = 1$.

    Use the Bruhat decomposition of $\GL_k$ to write $g = u_1 t u_2$, where $t$ is diagonal and $u_1$, $u_2$ are respectively upper and lower triangular with $1$'s on the diagonal. If $gA \in V$, then $u_2 = 1$, because otherwise $gA$ would have a nonzero entry below some position $(i,a_i)$. Next, $t = 1$, because otherwise $gA$ would have an entry other than $1$ in some position $(i,a_i)$. Finally, $u_1 = 1$, for otherwise if $u_1$ added a multiple of some row $\ell$ to a row $i < \ell$, then $gA$ would have a nonzero entry in position $A_{i,a_{\ell}}$ (if $b_{\ell} \leq b_i$) or position $A_{i,a_i+1}$ (if $b_{\ell} > b_i$).
\end{proof}
We will not need this fact, but it is worth noting that the proof of Lemma~\ref{lem:rank-variety-dimension} only requires that all left endpoints of intervals in $M$ are distinct, or that all right endpoints are, but not both (as required by the definition of a rank set).

\begin{lem} \label{lem:rank-vs-positroid} The Grassmann dual to an interval positroid variety $\Pi_M \subseteq \Gr^{n-k}(n)$ is $\Sigma_M \subseteq \Gr_k(n)$. \end{lem}
\begin{proof}
Let $\Pi_M^*$ denote the Grassmann dual of $\Pi_M$. Recall that $V \in \Pi_M$ if and only if $\dim \Prj_{S^*} V \leq \#S - \#S(M)$ for all $S \in M$. As in the proof of Lemma~\ref{lem:grassmann-duality},
\begin{equation*}
\dim \Prj_{S^*} V = \#S - k + \dim \Prj_{\bar{S}} \ann(V) = \#S - \dim(\ann(V) \cap \langle S \rangle).
\end{equation*}
Thus, $W = \ann(V) \in \Pi_M^*$ if and only if $\dim(W \cap \langle S \rangle) \geq \#S(M)$ for $S \in M$. These rank conditions hold when $W \in \Sigma^{\circ}_M$, so $\Sigma_M \subseteq \Pi_M^*$. Since $\Pi_M^*$ is irreducible and has the same dimension as $\Sigma_M$ by Lemmas~\ref{lem:dimensions} and \ref{lem:rank-variety-dimension}, we are done.
\end{proof}

Let $\phi_{t,i\to j}$ be the linear transformation sending $e_i$ to $te_i + (1-t)e_j$. For $t \neq 0$, the varieties $\phi_{t,i\to j} \Sigma_M$ are all isomorphic, so they form a flat family \cite[Proposition III-56]{geometry-of-schemes}. The flat limit $\lim_{t \to 0} \phi_{t,i\to j} \Sigma_M$ then exists as a scheme \cite[Proposition 9.8]{hartshorne}. The key fact for us is that $\Sigma_M$ and $\lim_{t \to 0} \phi_{t,i\to j} \Sigma_M$ have the same Chow ring class, hence the same cohomology class. Other authors have used these degenerations to calculate cohomology classes or K-theory classes of subvarieties of Grassmannians, including Coskun \cite{two-step-flags} and Vakil \cite{geometric-littlewood-richardson}. Our goal in this section is to exhibit a degeneration of $\Sigma_M$, for an appropriate $M$, which contains the diagram variety $X_{D(w)}$ as an irreducible component.

For a closed subscheme $X \subseteq \Gr_k(n)$, let $C_{i\to j}X = \lim_{t\to 0} \phi_{t,i\to j} X$. For $E \subseteq [k] \times [n]$, let $C_{i\to j}E$ be the subset of $[k] \times [n]$ obtained from $E$ by replacing columns $i$ and $j$ by their intersection and union, respectively. That is, $(p,q) \in C_{i\to j} E$ if and only if 
\begin{itemize}
\item $q \notin \{i,j\}$ and $(p,q) \in E$, or
\item $q = i$ and $(p,i), (p,j) \in E$, or
\item $q = j$ and $(p,i) \in E$ or $(p,j) \in E$.
\end{itemize}

\begin{lem}[\cite{liuthesis}, Proposition 5.3.3] \label{lem:diagram-degeneration} For any $E \subseteq [k] \times [n]$ we have $\Sigma_{C_{i\to j} E} \subseteq C_{i\to j} \Sigma_E$. \end{lem}

Given a permutation $w \in S_n$, define a rank set $M(w) = \{[w(i), i+n] : 1 \leq i \leq n\}$, so $\Sigma_{M(w)} \subseteq \Gr_n(2n)$. Then
\begin{equation*}
\tau^{2n} f_{M(w)}^{-1} = (n+1)\cdots(2n)(w(1)+2n)\cdots(w(n)+2n) = (w \times 12\cdots n) \tau^{-n}.
\end{equation*}
Here, for $w \in S_n$ and $v \in S_m$, $w \times v$ is the permutation in $S_{n+m}$ sending $i$ to $w(i)$ if $i \leq n$ and to $v(i-n)+n$ otherwise. By Lemmas~\ref{lem:rank-vs-positroid} and \ref{lem:grassmann-duality}, $\Sigma_{M(w)} = \Pi_{\tau^{2n} f_{M(w)}^{-1}}$. It is clear from Definition~\ref{defn:affine-stanley} that $F_{w \times 12\cdots n} = F_w$, so Theorem~\ref{thm:positroid-variety-class} gives
\begin{equation*}
    [\Sigma_{M(w)}] = [\Pi_{\tau^{2n} f_{M(w)}^{-1}}] = \phi(F_{w \times 12\cdots n}) = \phi(F_w).
\end{equation*}
In fact, $\Sigma_{M(w)}$ is a \emph{graph Schubert variety} as defined in \cite[\S 6]{positroidjuggling}, where it is also shown that $[\Sigma_{M(w)}] = \phi(F_w)$.

On the other hand, it is known \cite{plactification} that $s_{D(w)} = F_w$ where $D(w)$ is the \emph{Rothe diagram} of $w$:
\begin{equation*}
D(w) = \{(i,w(j)) \in [n] \times [n] : i < j, w(i) > w(j)\}.
\end{equation*}
For example,
\begin{equation*}
D(3142) = \begin{array}{cccc}
\circ & \circ & \cdot & \cdot\\
\cdot & \cdot & \cdot & \cdot\\
\cdot & \circ & \cdot & \cdot\\
\cdot & \cdot & \cdot & \cdot\\
\end{array}
\end{equation*}
Here we are using $\circ$ for points of $[n] \times [n]$ in $D(w)$ and $\cdot$ for points not in it. We also use matrix coordinates, meaning that $(1,1)$ is at the upper left.

Let $C_w$ be the composition of the (commuting) operators $C_{n+i \to w(i)}$ for $i \in [n]$, acting either on subsets of $[2n]$ or subschemes of $\Gr_n(2n)$ as before.
\begin{thm} For $w \in S_n$, the diagram variety $X_{D(w)}$ is an irreducible component of $C_w \Sigma_{M(w)}$. \end{thm}

\begin{proof} Define
\begin{equation*}
E(w) = ([n] \times [n] \setminus D(w)) \cup \{(i,n+i) : i \in [n]\},
\end{equation*}
so that $X_{D(w)} = \Sigma_{E(w)}$. Since $\codim \Sigma_{M(w)} = \ell(w) = \codim X_{D(w)}$, it suffices by Lemma~\ref{lem:diagram-degeneration} to show that $\Sigma_{C_w M(w)} = \Sigma_{E(w)}$.

Recall that we identify $M(w)$ with the set $\{(i,j) : i \in [n], w(i) \leq j \leq i+n\}$. First, if $j \leq n$ then $(i,j) \notin C_w M(w)$ if and only if $(i,j), (i, w^{-1}(j)+n) \notin M(w)$, if and only if $j < w(i)$ and $i < w^{-1}(j)$, if and only if $(i,j) \in D(w)$: thus $C_w M(w)$ and $E(w)$ agree on $[n] \times [n]$. For instance, $\Sigma_{M(3142)}$ contains
\begin{equation*}
\left\{ \rowspan \begin{bmatrix}
0 & 0 & * & * & * & 0 & 0 & 0\\
* & * & * & * & * & * & 0 & 0\\
0 & 0 & 0 & * & * & * & * & 0\\
0 & * & * & * & * & * & * & *
\end{bmatrix} \right\}
\end{equation*}
as a dense subset, and $C_{3142} \Sigma_{M(3142)}$ accordingly contains
\begin{equation*}
\left\{ \rowspan \begin{bmatrix}
0 & 0 & * & * & * & 0 & 0 & 0\\
* & * & * & * & * & * & 0 & 0\\
* & 0 & * & * & 0 & 0 & * & 0\\
* & * & * & * & * & 0 & * & *
\end{bmatrix} \right\}.
\end{equation*}

As we see in this example, $C_w M(w)$ and $E(w)$ need \emph{not} agree on $[n] \times [n+1,2n]$. However, note that $(i,j+n) \in C_w M(w)$ if and only if $i > j$ and $w(j) > w(i)$, and it is easy to check that this is equivalent to row $j$ of $D(w)$ containing row $i$. Thus, if $A$ is a matrix whose nonzero entries are exactly in positions $C_w M(w)$, then a row operation can be performed on rows $i$ and $j$ which replaces the $*$ in position $(i,j+n)$ by $0$ without changing the pattern of $*$'s in $[n] \times [n]$. This shows that $\Sigma_{C_w M(w)} = \Sigma_{E(w)}$.
\end{proof}

Since $[\lim_{t\to 0} \phi_{t,w} \Sigma_{M(w)}] = [\Sigma_{M(w)}]$, an immediate corollary is an upper bound on $[X_{D(w)}]$.

\begin{thm} \label{thm:diagram-variety-class-bound} $\phi(F_w) - [X_{D(w)}]$ is a nonnegative combination of Schubert classes. \end{thm}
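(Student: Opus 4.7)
The plan is to derive Theorem~\ref{thm:diagram-variety-class-bound} as essentially a direct corollary of the preceding theorem together with the flat-family invariance of cohomology classes.

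First I would recall the key inputs already established in the excerpt: the rank variety class calculation shows $[\Sigma_{M(w)}] = \phi(F_w)$, and the preceding theorem shows that the scheme-theoretic flat limit $\Sigma' = \lim_{t \to 0} \phi_{t,w} \Sigma_{M(w)}$ contains $X_{D(w)}$ as an irreducible component. Since the $\phi_{t,w}$ for $t \neq 0$ are automorphisms of $\Gr(n,2n)$, the family $\{\phi_{t,w} \Sigma_{M(w)}\}_{t \neq 0}$ is flat and the limit exists as a scheme in the Hilbert scheme sense. Flat families have constant Hilbert polynomial, and more importantly the Chow/cohomology class is preserved under flat degeneration, so $[\Sigma'] = [\Sigma_{M(w)}] = \phi(F_w)$ in $H^*(\Gr(n,2n),\Z)$.

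Next I would decompose the class of $\Sigma'$ along its irreducible components. Writing $\Sigma' = X_{D(w)} \cup Y_1 \cup \cdots \cup Y_r$ as a union of its (reduced) irreducible components, the class of the associated scheme is
\begin{equation*}
[\Sigma'] = m_0 [X_{D(w)}] + \sum_{i=1}^r m_i [Y_i],
\end{equation*}
where $m_0, m_1, \ldots, m_r$ are the geometric multiplicities of the respective components in $\Sigma'$, each a positive integer. Consequently
\begin{equation*}
\phi(F_w) - [X_{D(w)}] = (m_0 - 1) [X_{D(w)}] + \sum_{i=1}^r m_i [Y_i].
\end{equation*}

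Finally I would invoke the standard positivity fact that the cohomology class of any irreducible closed subvariety of $\Gr(k,n)$ is a nonnegative integer combination of Schubert classes (this follows because the Schubert basis is dual to itself via the intersection pairing up to the order-reversing involution on partitions, and one can intersect with generic translates of Schubert varieties of complementary dimension to read off the coefficients, which count points and are therefore nonnegative). Applying this to $[X_{D(w)}]$ and each $[Y_i]$ shows that the right-hand side is a nonnegative integer combination of Schubert classes, completing the proof.

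The only step that requires any real care is the bookkeeping about multiplicities and the cited positivity statement for classes of irreducible subvarieties; neither is an obstacle so much as a standard fact to cite. The substantive geometric work has already been carried out in Lemma~\ref{lem:jamespeel-move}, Theorem~\ref{thm:rank-variety-class}, and the identification of $X_{D(w)}$ as a component of the degeneration, so the present theorem is essentially a formal consequence.
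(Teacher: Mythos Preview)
Your proposal is correct and follows essentially the same approach as the paper, which simply states the theorem as ``an immediate corollary'' of the preceding results without writing out any further details. You have spelled out explicitly what the paper leaves implicit: the flat-limit invariance of the class, the decomposition into components with multiplicities, and the Schubert-positivity of classes of irreducible subvarieties.
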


However, this difference of classes can be nonzero. Indeed, the counterexample $D = \{(1,1), (2,2), (3,3), (4,4)\}$ to Conjecture~\ref{conj:liu} discussed in Section~\ref{sec:counterexample} provides an example. Take $w = 21436587$. Then $D(w) = \{(1,1),(3,3),(5,5),(7,7)\}$ can be obtained from $D$ by permuting rows and columns, and viewing $D$ in a larger rectangle. Neither of these operations on diagrams affects $s_D$ or $[X_D]$, identifying the latter with its pullback along the embeddings of $\Gr_k(n)$ into $\Gr_k(n{+}1)$ or $\Gr_{k+1}(n{+}1)$.

\bibliographystyle{plain}
\bibliography{../../bib/algcomb}

\end{document}